\newtheorem{thm}{Theorem}
\newtheorem{lem}[thm]{Lemma}
\newtheorem{prop}[thm]{Proposition}
\newtheorem{cor}[thm]{Corollary}
\newcommand{\beq}[1]{\begin{equation}\label{#1}}
\newcommand{\eeq}{\end{equation}}
\newcommand{\bep}{\begin{proof}}
\newcommand{\eep}{\end{proof}}
\newcommand{\bec}[1]{\begin{cor}\label{#1}}
\newcommand{\eec}{\end{cor}}
\newcommand{\bepr}[1]{\begin{prop}\label{#1}}
\newcommand{\eepr}{\end{prop}}
\newcommand{\bel}[1]{\begin{lem}\label{#1}}
\newcommand{\eel}{\end{lem}}
\newcommand{\bet}[1]{\begin{theorem}\label{#1}}
\newcommand{\eet}{\end{theorem}}
\newcommand{\bZ}{\ensuremath{\mathbb{Z}}}
\newcommand{\bQ}{\ensuremath{\mathbb{Q}}}
\newcommand{\bR}{\ensuremath{\mathbb{R}}}
\let\eps=\varepsilon
\def\hh{{\rm h}\kern.4pt}
\begin{document}

\title[Two-parameter families of Diophantine triples]{Two-parameter families of uniquely extendable Diophantine triples}{}

\author[M. Cipu]{Mihai Cipu}
\address{Simion Stoilow Institute of Mathematics of the 
Romanian Academy, Research unit nr. 5,
P.O. Box 1-764, RO-014700 Bucharest, Romania}
\email{Mihai.Cipu@imar.ro}

\author[Y. Fujita]{Yasutsugu Fujita}
\address{Department of Mathematics, College of Industrial Technology, 
Nihon University, 2-11-1 Shin-ei, Narashino, Chiba, Japan}
\email{fujita.yasutsugu@nihon-u.ac.jp}

\thanks{The second author is partially supported by JSPS KAKENHI Grant Number 16K05079.}

\author[M. Mignotte]{Maurice Mignotte}
\address{D\'epartement de Math\'ematique, Universit\'e de Strasbourg, 
67084 Strasbourg, France}
\email{mignotte@math.u-strasbg.fr}

\title[Two-parameter families of Diophantine triples]{Two-parameter families of uniquely extendable Diophantine triples}{}

\subjclass[2010]{11D09, 11B37, 11J68, 11J86}
\keywords{Diophantine $m$-tuples, Pell equations, hypergeometric method, linear forms in logarithms}

\begin{abstract}
 Let $A,\,K$ be positive integers and  $\varepsilon \in \{-2,-1,1,2\}$. 
The main contribution of the paper is a proof that each 
of the $D(\varepsilon^2)$-triples
$\{K,A^2K+2\varepsilon A,(A+1)^2K+2\varepsilon(A+1)\}$   has 
unique extension to a $D(\varepsilon^2)$-quadruple. This is used 
to slightly strengthen the conditions required for the existence
of a $D(1)$-quintuple whose smallest three elements form a regular
triple.
\end{abstract}

\maketitle

%%%%%%%%%%%%%%%%%%%%%%%%%%%%%%%%%%%%%%%%%%%%%%%%%%%%%%%%%%%%

%%%%%%%%%%%%%%%%%%%%%%%%%%%%%%%%%%%%%%%%%%%%%%%%%%%%%%%%%%%%%%%%%%%%%
\section{Introduction}\label{sec:intr}
%%%%%%%%%%%%%%%%%%%%%%%%%%%%%%%%%%%%%%%%%%%%%%%%%%%%%%%%%%%%
Let $n$ be an arbitrary integer. A set of positive integers is
called $D(n)$-tuple if the product of any two distinct elements
increased by $n$ is a perfect square. In case the set has cardinality
2 (3, 4 or 5) one speaks of a $D(n)$-pair (triple, quadruple or
quintuple, respectively).

Among $D(n)$-sets, the most studied ones are  those with $n=1$.
The interest and efforts are driven towards confirmation of the 
folklore conjecture that predicts there are no $D(1)$-quintuples.
A good deal of necessary conditions for the existence of a   
$D(1)$-quintuple is presently known. In a recent work on this 
subject~\cite{CFF} it is shown that if $\{a,b,c,d,e\}$ is a  
$D(1)$-quintuple with $a<b<c<d<e$ and $c=a+b+2\sqrt{ab+1}$ then 
$b<a^3$. Therefore, the positive integer $r$ satisfying $ab+1=r^2$
is less than $a^2$. In the extremal case $r=a^2-1$ the three
smallest elements of such a $D(1)$-quintuple are $a$, $b=a^3-2a$,
$c=a(a+1)^2-2(a+1)$. One of the present authors has remarked that
this triple is formally obtained by specializing $k$ to $-a$ in
the triple $\{k, a^2k+2a,(a+1)^2k+2(a+1)\}$ considered in~\cite{HT1} 
and then changing the sign of all entries. To put it differently, 
our triple appears in the two-parameter family 
$\{K,A^2K-2 A, (A+1)^2K-2(A+1)\}$ dual to that considered by He and
Togb\'e.  A closer look at~\cite{HT1} reveals that the companion
$D(1)$-triple is in fact mentioned in the introduction to that
paper without further study.

A close similarity of results on $D(1)$- and $D(4)$-sets
is well documented in literature, 
as found, e.g., by comparing \cite{dus} and \cite{Fany} with 
\cite{Filipin-2,Filipin-4} and \cite{Filipin-3}. 
One of the common properties is that any $D(\sigma)$-triple with 
$\sigma \in \{1,4\}$ can be extended to a $D(\sigma)$-quadruple. 
More precisely,  if $\{a,b,c\}$ is a $D(\sigma)$-triple, then 
$\{a,b,c,d_+\}$ is a $D(\sigma)$-quadruple, where
\[
d_+=a+b+c+\frac{2}{\sigma}(abc+rst)
\]
and 
\[
r=\sqrt{ab+\sigma},\quad s=\sqrt{ac+\sigma},\quad t=\sqrt{bc+\sigma}.
\]
Such a $D(\sigma)$-quadruple is called {\it regular}, and it is 
conjectured that any $D(\sigma)$-quadruple is regular (cf.~\cite{ahs,DR}). 
Among $d$'s such that $\{a,b,c,d\}$ is a $D(\sigma)$-quadruple with 
$a<b<c<d$, the smallest integer is known to be $d_+$ from 
\cite[Proposition 1]{dus} and \cite[Proposition 1]{Filipin-3}.

The present paper deals with two closely related families, viz.~those 
of $D(4)$-triples mentioned
in the abstract. The outcome of our study is the theorem below, showing 
that each of the triples under scrutiny has unique extension to
quadruple. In particular, the next result shows that the conjecture
mentioned above is true for the families examined in this paper.

%%%%%%%%%%
\begin{thm}\label{thm:main}
Let $A,\,K$ be positive integers. 
If $\{K,A^2K+2\varepsilon A,(A+1)^2K+2\varepsilon(A+1),d\}$ is a 
$D(\varepsilon^2)$-quadruple with $\varepsilon \in \{-2, -1, 1, 2\}$, 
then they are regular, in other words, we have
\begin{align}\label{d}
d= d_+=\varepsilon^{-2}(2A^2+2A)^2K^3+\varepsilon^{-1}(16A^3+24A^2+8A)K^2+
(20A^2+20A+4)K+\varepsilon(8A+4).
\end{align}
\end{thm}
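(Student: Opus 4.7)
The plan is to reduce the problem to a system of three simultaneous Pell-type equations, analyse the solutions via linear recurrences, and then combine a gap principle with Baker's theorem on linear forms in logarithms; for small values of the parameters, where Baker-type bounds are too weak, the hypergeometric (Padé approximation) method would fill the gap.

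Writing $a=K$, $b=A^2K+2\varepsilon A$, $c=(A+1)^2K+2\varepsilon(A+1)$, a direct computation confirms that the triple is $D(\varepsilon^2)$ with
$$
r=\sqrt{ab+\varepsilon^2}=AK+\varepsilon,\ \ s=\sqrt{ac+\varepsilon^2}=(A+1)K+\varepsilon,\ \ t=\sqrt{bc+\varepsilon^2}=A(A+1)K+(2A+1)\varepsilon.
$$
If $d>0$ is an extension, there exist positive integers $x,y,z$ with $ad+\varepsilon^2=x^2$, $bd+\varepsilon^2=y^2$, $cd+\varepsilon^2=z^2$, and eliminating $d$ pairwise yields
$$
bx^2-ay^2=\varepsilon^2(b-a),\quad cx^2-az^2=\varepsilon^2(c-a),\quad cy^2-bz^2=\varepsilon^2(c-b).
$$
The integer solutions of each generalised Pell equation decompose into finitely many classes indexed by the powers of the relevant fundamental units, forcing $z$ to occur simultaneously in two linear recurrence sequences $(z_m)$ and $(z'_n)$; the desired identity $d=d_+$ is then equivalent to the statement that these two sequences meet only at their (shared) initial value.

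Next I would establish a congruence-based gap principle: reducing the recurrences for $z_m$ and $z'_n$ modulo $c$ (and modulo $4c$ when $\varepsilon^2=4$), one isolates the admissible initial fundamental solutions via lower bounds on $|z|$, and then shows that any coincidence $z_m=z'_n$ with $\max(m,n)\ge 2$ forces $d\gg c^2$. For large indices, the equation $z_m=z'_n$ translates into a three-term linear form
$$
\Lambda = m\log(s+\sqrt{ac}) - n\log(t+\sqrt{bc}) + \log\kappa,
$$
with $\kappa$ algebraic of effectively bounded height; Matveev's theorem then yields $\max(m,n)\ll(\log(AK))^2$, which together with the gap $d\gg c^2$ confines $(A,K)$ to an effectively computable finite region.

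The main obstacle is closing the argument in the low-parameter regime (small $A$ or $K$, and the sign cases $\varepsilon\in\{-1,-2\}$ with $A=1$, where the natural ordering of $\{a,b,c\}$ may fail). Here Baker's bounds are too coarse, so I would recast the Pell equation $cy^2-bz^2=\varepsilon^2(c-b)$ as a Thue equation and apply the hypergeometric method via Padé approximants to $(1-x)^{1/2}$, obtaining sharp effective upper bounds on $|y|,|z|$ that exclude all non-regular solutions. The few residual explicit cases can then be dispatched by direct computation, after which both sign choices of $\varepsilon$ and both values of $|\varepsilon|$ are covered uniformly.
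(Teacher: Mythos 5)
Your overall skeleton (Pell system, recurrences, restricting fundamental solutions by congruences, then linear forms in logarithms plus a finite verification) matches the general strategy, but two of your key steps would not go through, and they are exactly where the real difficulty of this two-parameter problem lies. First, the claim that Matveev's three-logarithm bound $\max(m,n)\ll(\log(AK))^2$ "together with the gap $d\gg c^2$ confines $(A,K)$ to an effectively computable finite region" is not justified and, as stated, fails. A gap between $d$ and $c$ does not bound the parameters; what you need is a lower bound on the index $m$ that grows with $A$ and $K$ fast enough to beat the logarithmic Baker bound. Here $c=a+b+2r$, so $b$ and $c$ are of the same size and the congruence machinery only yields a lower bound of the shape $m>(A-1)\nu\log\beta$ (linear in $A$, logarithmic in $K$). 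Comparing this with a linear-forms upper bound can at best bound $A$ (and even that requires the sharper two-logarithm estimates of Laurent rather than Matveev, which is how the paper gets $A\le 2800$ resp. $3365$); it gives no bound on $K$ whatsoever. Your plan therefore has no mechanism for bounding the second parameter.

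Second, your use of the hypergeometric method is misdirected. You propose to recast the single Pell equation $cy^2-bz^2=\varepsilon^2(c-b)$ as a Thue equation and extract upper bounds on $|y|,|z|$; but a generalized Pell equation has infinitely many integer solutions, so no such upper bound exists. The method has to be applied to the simultaneous approximation problem coming from the pair of equations, i.e.\ to rational approximations with a common denominator to the two numbers $\sqrt{1-\varepsilon A/N}$ and $\sqrt{1+\varepsilon/N}$ with $N=(A^2+A)K/2+\varepsilon A$ (a Rickert/Bennett-type theorem). Moreover this tool works in the opposite regime to the one you assign it: it needs the two numbers close to $1$ and an irrationality exponent $\lambda<2$, which forces $K$ to be large compared with $A$ (the paper needs $K\geq 30.03|\varepsilon|^3(A+1)$); it is precisely the instrument that bounds $K$ linearly in $A$, not a patch for "small $A$ or $K$". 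With $A$ bounded by the two-logarithm estimates and $K\ll A$ by the hypergeometric step, what remains is not "a few residual explicit cases" but a Baker--Davenport (Dujella--Peth\H{o}) reduction over all pairs $(A,K)$ in that box, a computation of several months. So the architecture needs to be reorganized: simultaneous Rickert-type approximation for $K$ large relative to $A$, Laurent's two-logarithm bounds combined with the index lower bound $m>(A-1)\nu\log\beta$ to bound $A$ absolutely, Matveev only to get an absolute bound on $m$ for the reduction, and then the large-scale reduction itself.
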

%%%%%%%%%%

Note that the assumption in Theorem \ref{thm:main} immediately implies 
that $d$ is the largest element in the quadruple. 
Indeed, substituting $a=K$, $b=A^2K+2\varepsilon A$ and 
$c=(A+1)^2K+2\varepsilon (A+1)$ shows
\begin{align}\label{c}
c=a+b+2r
\end{align}
with $r=\sqrt{ab+\varepsilon^2}$, 
and if $d<c$, then one can deduce from the minimality of ``$d_+$'' 
mentioned above that
\begin{align}\label{cge}
c \ge a+b+d+\frac{2}{\varepsilon^2}(abd+rs't')
\end{align}
with $s'=\sqrt{ad+\varepsilon^2}$ and $t'=\sqrt{bd+\varepsilon^2}$. 
It follows from (\ref{c}) and (\ref{cge}) that $d \le 0$, a contradiction. 

It is also to be noted that it suffices to prove the thesis for $\varepsilon$
even. Indeed, if $\varepsilon= \pm 2$ and $K$ is even then 
simplification by $2$ results in $D(1)$-triples belonging to 
the desired families and transforms the fourth element $d$ in the 
required form. Conversely, doubling all the entries of a $D(1)$-triple
in the indicated families, one obtains a $D(4)$-triple in the 
families with doubled $\varepsilon$.

The result published in~\cite{HT2} for the $\varepsilon=1$ case
says that the conclusion of our Theorem~\ref{thm:main} holds 
for either $A \le 10$ or $A \ge 52330$.  Similar results have 
been published in~\cite{fht} for $\varepsilon=2$.  More precisely, 
the statement has been proved for  $A \le 22$ as well as for 
$A \ge 51767$.

Theorem \ref{thm:main} has the following corollary on extendability
of more general $D(\varepsilon^2)$-triples $\{a,b,a+b+2r\}$, where 
$r=\sqrt{ab+\varepsilon^2}$, to quadruples. 
%%%%%%%%%%%%%%%%%
\begin{cor}\label{cor:main}
Let $\varepsilon \in \{-2,-1,1,2\}$. 
Let $\{a,b,c,d\}$ be a $D(\varepsilon^2)$-quadruple with $a<b<c$ 
and $c=a+b+2r$, where $r=\sqrt{ab+\varepsilon^2}$. 
If $r \equiv \varepsilon \pmod{a}$, then $d=d_+$. In particular, if 
$a$ has either of the forms $4|\varepsilon|$, $p^e$ and $2p^e$ with 
$p$ an odd prime and $e$ a non-negative integer, then $d=d_+$. 
\end{cor}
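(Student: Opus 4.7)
The plan is to view Corollary~\ref{cor:main} as an immediate consequence of Theorem~\ref{thm:main}: the hypothesis $r\equiv\varepsilon\pmod a$ is precisely what is needed to recognize the triple $\{a,b,c\}$ as a member of the two-parameter family treated there. Concretely, I set $K=a$ and, using the congruence, write $r=\varepsilon+aA$ for some integer $A$. From $r=\sqrt{ab+\varepsilon^2}>|\varepsilon|$ (which holds because $ab\geq 1$) together with $r>0$, I conclude $aA=r-\varepsilon>0$, so $A$ is a positive integer. A direct calculation then gives $b=(r^2-\varepsilon^2)/a=A^2K+2\varepsilon A$, and $c=a+b+2r=(A+1)^2K+2\varepsilon(A+1)$. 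Hence $\{a,b,c,d\}$ is a $D(\varepsilon^2)$-quadruple of exactly the shape treated by Theorem~\ref{thm:main}, and the theorem delivers $d=d_+$.

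For the ``in particular'' clause I would first show that each of the three listed shapes of $a$ forces $r\equiv\pm\varepsilon\pmod a$; the case $r\equiv-\varepsilon\pmod a$ then reduces to the case already handled by swapping $\varepsilon$ with $-\varepsilon$, which is legitimate because Theorem~\ref{thm:main} covers all four signs and the resulting value $d_+$ depends only on $(a,b,c,\varepsilon^2)$. For $a=4|\varepsilon|\in\{4,8\}$ I would simply enumerate squares modulo $a$: the congruence $r^2\equiv\varepsilon^2\pmod a$ has only the two solutions $r\equiv\pm\varepsilon$. For $a=p^e$ with $p$ an odd prime, one has $p\nmid 2\varepsilon$ (since $|\varepsilon|\leq 2$ and $p$ is odd), so from $(r-\varepsilon)(r+\varepsilon)\equiv 0\pmod{p^e}$ and $\gcd(r-\varepsilon,r+\varepsilon)\mid 2\varepsilon$ the prime power $p^e$ must divide one of the two factors. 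For $a=2p^e$ I would apply the Chinese Remainder Theorem: modulo $2$ the equation $r^2\equiv\varepsilon^2$ forces $r\equiv\varepsilon\pmod 2$, while modulo $p^e$ the previous argument yields $r\equiv\pm\varepsilon$; since $\varepsilon\equiv-\varepsilon\pmod 2$, these lift consistently to $r\equiv\pm\varepsilon\pmod{2p^e}$.

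The entire argument is thus a change of variables followed by a direct appeal to Theorem~\ref{thm:main}; apart from this, the only substantive content is the elementary verification that the three listed shapes of $a$ really do force $r\equiv\pm\varepsilon\pmod a$. No serious obstacle arises; the only point requiring care is sign bookkeeping, since Theorem~\ref{thm:main} is stated with a signed $\varepsilon$ while the ``in particular'' step yields only $\pm\varepsilon$ a priori.
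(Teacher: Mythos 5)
Your proposal is correct and follows essentially the same route as the paper: substitute $K=a$, $A=k$ with $r=ka+\varepsilon$ to place the triple in the family of Theorem~\ref{thm:main}, and deduce the ``in particular'' clause from the universal congruence $r^2\equiv\varepsilon^2\pmod a$, the listed shapes of $a$ forcing $r\equiv\pm\varepsilon\pmod a$, and the symmetry $\varepsilon\mapsto-\varepsilon$. You merely spell out details (positivity of $A$, the gcd/CRT verification) that the paper leaves implicit.
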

%%%%%%%%%%%%%%%%%

The progress achieved in our work is largely due to a version of
Rickert's theorem tailored for the triples we study. The novelty
in its proof (given in Section~\ref{R}) is to exploit,  besides $N$  
being  divisible by $A$ (where $ N=(A^2+A)K/2 \pm 2A $), the fortunate 
fact that both $N \mp 2A$ and $N \pm 2$ are divisible by $A+1$.                                      
Theorem~\ref{thm:RB} in conjunction with an older theorem of
Laurent~\cite{lau} providing sharp upper bounds for linear forms
in the logarithms of two algebraic numbers allows us to obtain
remarkably small absolute bounds on $A$. Section~\ref{sec:baker2}
contains the details. With some computer help, we next show in
Proposition~\ref{prop:hg2} that if any $D(4)$-triple would be
extendable to two quadruples then $K < 240.24 (A+1)+K_0$ as
soon as $A\ge A_0$. Here,  $A_0$, $K_0$ are small positive integers
determined by a gp script. Such a result is very helpful in
reducing the number of pairs $(A,K)$ for which an application
of Baker-Davenport reduction is required.

In the final section of the paper we come back to the original
problem on $D(1)$-quintuples and slightly improve the bounds
on entries if the smallest ones form a regular triple.

\begin{prop} \label{pr:appfin}
Let  $\{a,b,c,d,e\}$ be a $D(1)$-quintuple  with $a<b<c<d<e$  and  
$c=a+b+2\sqrt{ab+1}$. Then 
$b \le a^3-2a\left\lceil \sqrt{3a+1} \, \right \rceil +3$
and $a\ge 32$.
\end{prop}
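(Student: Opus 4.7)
Plan: The plan is to leverage Theorem~\ref{thm:main} to constrain the residue of $r := \sqrt{ab+1}$ modulo $a$. Starting from the bound $b < a^3$ of \cite{CFF}, which yields $r \le a^2-1$, I write $r = a^2 - j$ with $j \ge 1$; the integrality of $b = (r^2-1)/a$ forces $a \mid j^2-1$, so setting $m := (j^2-1)/a \ge 1$ one obtains
\[
b = a^3 - 2aj + m.
\]
The target inequality $b \le a^3 - 2a\lceil\sqrt{3a+1}\rceil + 3$ is then, after substitution, equivalent to $2a(j - \lceil\sqrt{3a+1}\rceil) \ge m-3$, so the whole task reduces to obtaining a lower bound on $j$.

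The first step is to exclude $r \equiv \pm 1 \pmod{a}$. If $r = Aa + \varepsilon$ for some positive integer $A$ and $\varepsilon \in \{-1,1\}$, a short computation yields $\{a,b,c\} = \{a,\,A^2a + 2\varepsilon A,\,(A+1)^2a + 2\varepsilon(A+1)\}$, placing the triple squarely in the family of Theorem~\ref{thm:main} with $K = a$. Theorem~\ref{thm:main} then forces any fourth element to equal $d_+$, contradicting the existence of two distinct extensions $d < e$ in the quintuple. Hence $j \not\equiv \pm 1 \pmod{a}$, so $j$ is a \emph{nontrivial} square root of $1$ modulo $a$; in particular $a$ is not of the forms $4$, $p^e$, or $2p^e$, in agreement with Corollary~\ref{cor:main}.

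When $m \ge 3$ the inequality $j \ge \sqrt{3a+1}$ is immediate from $j^2 = am+1$, and combined with the integrality of $j$ it yields the desired bound on $b$ by a direct check of the polynomial inequality above. The genuinely delicate cases are $m = 1$ and $m = 2$, corresponding respectively to $a+1$ or $2a+1$ being a perfect square; here $j = \sqrt{ma+1}$ is too small for the bound to hold directly, so these exceptional configurations have to be excluded by separate arithmetic arguments exploiting the very special structure forced on $a$ and on the triple $\{a,b,c\}$ (re-identifying it with another instance of Theorem~\ref{thm:main}'s family, or forcing a congruence that brings Corollary~\ref{cor:main} to bear). For the lower bound $a \ge 32$, Corollary~\ref{cor:main} eliminates every $a \le 31$ of the forms $4$, $p^e$, or $2p^e$, leaving only $a \in \{8,12,15,16,20,21,24,28,30\}$; each of these admits only a short, finite list of admissible residues of $r$ modulo $a$, and the resulting candidate triples are eliminated one-by-one by combining the above tools with known bounds on $D(1)$-quintuples and a short computer search.

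The main obstacle is precisely the handling of the exceptional cases $m \in \{1,2\}$, since these are not covered by a direct application of Theorem~\ref{thm:main} and so demand separate, ad hoc arithmetic arguments; the same phenomenon is what makes the finite case analysis for the nine residual small values of $a$ the most labour-intensive part of the proof.
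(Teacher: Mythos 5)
Your overall decomposition ($r=a^2-j$, $a\mid j^2-1$, $m=(j^2-1)/a$, and reduction of the target bound to $m\ge 3$, i.e.\ $j\ge\sqrt{3a+1}$) is exactly the skeleton of the paper's argument (there $j$ is called $\Delta$). The genuine gap is that you leave precisely the hard cases unproved and propose tools that cannot close them. The case $m=2$ is fine in principle: since $\Delta^2=2a+1$ forces $\Delta$ odd, $8\mid 2a$, so $a$ is even, and then $\gcd(b,c)=1$ from Lemma~\ref{lem:t13} forces $m=(\Delta^2-1)/a$ to be odd --- an elementary parity argument you gesture at (``forcing a congruence'') but do not give. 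The case $m=1$, however, is the crux and is \emph{not} reducible to ``re-identifying the triple with another instance of Theorem~\ref{thm:main}'s family'' or to Corollary~\ref{cor:main}: here $a=\Delta^2-1$ and $r\equiv-\Delta\pmod a$, where $\Delta$ is a \emph{nontrivial} square root of $1$ modulo $\Delta^2-1$, so neither result applies. The paper has to prove a brand-new unique-extendability statement for this one-parameter family (Proposition~\ref{prop:delta}), with its own chain of lemmas, Laurent's two-logarithm bound, Matveev's theorem and a Baker--Davenport reduction; without an argument of that strength your plan cannot rule out $m=1$, and the bound $b\le a^3-2a\lceil\sqrt{3a+1}\rceil+3$ fails to follow.

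The lower bound $a\ge 32$ has a similar, smaller defect: you assert that the nine residual values $a\in\{8,12,15,16,20,21,24,28,30\}$ are ``eliminated one-by-one by a short computer search,'' but give no mechanism. The paper first shrinks this list substantially before computing: $b>4000$ (Lemma~3.4 of \cite{CFF}) together with $b<a^3$ gives $a\ge 16$ and, via the interlacing $\Delta\ge 6$, yields Proposition~\ref{pr:app} ($a\ge 20$); the parity consequence of $\gcd(b,c)=1$ shows an even $a$ must be divisible by $8$, killing $20,28,30$; only then are $a=21$ and $a=24$ treated, and each requires the full Baker--Davenport reduction for the specific candidate triples, not merely ``known bounds on $D(1)$-quintuples.'' So the finite check is heavier than your sketch suggests, and the $m=1$ family is the missing idea that your proposal does not supply.
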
 

\bepr{pr:appsup}
Let  $\{a,b,c,d,e\}$ be a $D(1)$-quintuple  with $a<b<c<d<e$ and $b < 4a$. Then 
$b \le 4a-4\left\lceil \sqrt{3a+1} \, \right \rceil +3$
and $a\ge 32815$.
\eepr
 
%%%%%%%%%%%%%%%%%%%%%%%%%%%%%%%%%%%%%%%%%%%%%%%%%%%%%%%%%%%%%%
\section{Optimization of Rickert's theorem}\label{R}
%%%%%%%%%%%%%%%%%%%%%%%%%%%%%%%%%%%%%%%%%%%%%%%%%%%%%%%%%%%%%%

The goal of this section is to provide the main technical tool
used in our proof of Theorem~\ref{thm:main}. As already mentioned,
it is a variant of Rickert's theorem that takes into account all 
peculiarities of the families we study.

%%%%%%%%%%%%%%%%%%%%%%%%%%%%%%%%%%%
\begin{thm}\label{thm:RB}
Let $\varepsilon \in \{-2,-1,1,2\}$ and let $A$, $K$ be integers 
satisfying $K \geq 30.03|\varepsilon|^3(A+1)$ with either 
$A \ge 3$ or $A=|\varepsilon|=2$. Put $N=(A^2+A)K/2+\varepsilon A$. 
Then the numbers $\theta_1=\sqrt{1-\varepsilon A/N}$ and 
$\theta_2=\sqrt{1+\varepsilon/N}$ satisfy
\[
\max \left\{\left|\theta_1-\frac{p_1}{q}\right|, 
\left|\theta_2-\frac{p_2}{q}\right|\right\}> 
\left(2.838\cdot10^{28}(A+1)N\right)^{-1}q^{-\lambda}
\]
for all integers $p_1,\,p_2,\,q$ with $q>0$, where
\[
\lambda=1+\frac{\log(20(A+1)N)}{\log \left(\frac{1.338N^2}{|\varepsilon|^3A(A+1)}\right)}<2.
\]
\end{thm}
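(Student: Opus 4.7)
The plan is to follow Rickert's hypergeometric approach to effective simultaneous irrationality measures, tuned here to the two specific algebraic numbers $\theta_1,\theta_2$. For each positive integer $n$, I would construct via the standard contour-integral (or ${}_2F_1$) representation a triple of polynomials $P_{0,n},P_{1,n},P_{2,n}$ of degree $n$ such that, for each $j\in\{1,2\}$,
\[
P_{j,n}(z) - P_{0,n}(z)\,(1+b_j z)^{1/2} \;=\; O\!\left(z^{2n+1}\right) \qquad (z\to 0),
\]
with $(b_1,b_2)=(-\varepsilon A,\varepsilon)$. Evaluating at $z=1/N$ and using the Laplace-type integral representation of the residual yields $|P_{j,n}(1/N) - P_{0,n}(1/N)\theta_j|\le C_1 V^n$, while direct term-by-term estimates give $|P_{0,n}(1/N)|\le C_2 U^n$. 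The quantities $U,V$ are arranged so that $UV$ becomes small as soon as $N^2$ dominates $A(A+1)$.

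The essence of the optimization is to exploit three simultaneous divisibilities peculiar to the families under study:
\[
N = A\!\left(\tfrac{(A+1)K}{2} + \varepsilon\right),\quad N - \varepsilon A = \tfrac{A(A+1)K}{2},\quad N + \varepsilon = (A+1)\!\left(\tfrac{AK}{2}+\varepsilon\right).
\]
Thus $A\mid N$, $(A+1)\mid(N-\varepsilon A)$, and $(A+1)\mid(N+\varepsilon)$. These allow me to extract additional powers of $A$ from $P_{1,n}(1/N)$ and of $A+1$ from both $P_{1,n}(1/N)$ and $P_{2,n}(1/N)$, beyond what Rickert's generic argument yields, and to absorb a matching factor into the common denominator $P_{0,n}(1/N)$. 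After this trimming I obtain integer triples $p_{j,n},q_n$ with $|q_n|\le c_1 U^n$ and $\max_j|q_n\theta_j - p_{j,n}|\le c_2 V^n$, where the reduced $U$ carries the factor $|\varepsilon|^3 A(A+1)/N^2$; the hypothesis $K\ge 30.03|\varepsilon|^3(A+1)$ is chosen precisely to force $UV<1$ with the margin that produces the $1.338$ appearing in the statement.

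The conclusion then follows from the standard Rickert gap argument: if integers $p_1,p_2,q>0$ gave a simultaneous approximation sharper than the claimed bound, then, for $n$ chosen to balance $|q_n|$ against $q$, the integer quantities
\[
\Delta_{j,n} \;=\; q_n\bigl(P_{j,n+1}q - p_j q_{n+1}\bigr) - q_{n+1}\bigl(P_{j,n}q - p_j q_n\bigr)
\]
would have absolute value less than $1$ for both $j=1,2$, while at least one of $\Delta_{1,n},\Delta_{2,n}$ is nonzero (using $\theta_1\ne\theta_2$ to preclude simultaneous vanishing). Optimizing $n$ against $q$ yields the exponent $\lambda = 1 + \log U/\log(1/V)$, and propagating the constants $C_1,C_2,c_1,c_2$ gives the stated $2.838\cdot 10^{28}(A+1)N$. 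The main obstacle is the quantitative bookkeeping: the binomial coefficients in the hypergeometric expansion, the remainder integral, and the exact $p$-adic savings from the $A$- and $(A+1)$-divisibilities must all be bounded tightly enough to certify $\lambda<2$ under only a linear lower bound on $K$, uniformly in the admissible pairs $(\varepsilon,A)$ and including the borderline case $A=|\varepsilon|=2$. A secondary subtlety is the parity-of-$K$ case analysis forced by the halving in $N = A(A+1)K/2 + \varepsilon A$, which has to be handled without weakening the uniform bound on the exponent.
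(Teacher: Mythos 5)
Your overall strategy---Rickert-type simultaneous approximations to $\theta_1,\theta_2$ with extra integrality squeezed out of the divisibilities $A\mid N$, $(A+1)\mid(N-\varepsilon A)$, $(A+1)\mid(N+\varepsilon)$---is indeed the route the paper takes (it sets $a_0=-\varepsilon A$, $a_1=0$, $a_2=\varepsilon$ in the contour integral and reads the savings off the explicit denominators $C_{ij}$). However, two steps of your sketch fail as stated. First, the construction: with a \emph{common} denominator $P_{0,n}$ of degree $n$ you cannot demand $P_{j,n}(z)-P_{0,n}(z)(1+b_jz)^{1/2}=O(z^{2n+1})$ for both $j=1,2$; that imposes $4n+2$ linear conditions on the $3n+3$ available coefficients, which is impossible once $n\ge 2$. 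The correct objects are, for each $k$, \emph{three} forms $\sum_{j=0}^{2}p_{ijk}\theta_j$ ($i=0,1,2$), each small, with the $3\times3$ matrix $(p_{ijk})$ nonsingular. Second, and more seriously, your concluding gap argument is broken: the quantity $\Delta_{j,n}=q_n\bigl(P_{j,n+1}q-p_jq_{n+1}\bigr)-q_{n+1}\bigl(P_{j,n}q-p_jq_n\bigr)$ simplifies to $q\bigl(q_nP_{j,n+1}-q_{n+1}P_{j,n}\bigr)$, so it does not involve $p_1,p_2$ at all, and the fact that $\theta_1\ne\theta_2$ in no way precludes the simultaneous vanishing of the integers $q_np_j-P_{j,n}q$ that you actually need to be nonzero. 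The non-vanishing that drives the method is precisely the determinant hypothesis in Bennett's lemma (Lemma~\ref{lem:RB}): with three independent forms at each level, at least one is nonzero at $(q,p_1,p_2)$. A single approximation vector per level, compared at two consecutive levels, carries no such guarantee, and this is exactly the point your sketch leaves unproved.

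Beyond that structural gap, the stated constants are not reachable by generic ``bookkeeping'' along your lines. The paper obtains the common multiplier $\{8|\varepsilon|^{3}A(A+1)N\}^{k}$ from the explicit $C_{ij}$ (this is where the three divisibilities enter), but then it also needs the integrality of $2^{2k-1}\binom{k+1/2}{h_j}$ and the gcd cancellation $\Pi_2(k)>1.6^{k}/(4.09\cdot10^{13})$ imported from \cite{CF}; only after dividing by $\Pi_2(k)$ do the values $P<20(A+1)N$, $L>1.338N^{2}/(|\varepsilon|^{3}A(A+1))$ and $C^{-1}<2.838\cdot10^{28}(A+1)N$ emerge, and the borderline case $A=|\varepsilon|=2$ is then checked separately against these estimates. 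Your proposal acknowledges the bookkeeping but does not identify these two ingredients, without which neither $\lambda<2$ under the merely linear hypothesis $K\ge 30.03|\varepsilon|^{3}(A+1)$ nor the quoted numerical constants can be certified.
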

%%%%%%%%%%%%%%%%%%%%%%%%%%%%%%%%%%%
\begin{proof}
Note that the assumptions $A \geq 3$, $K \geq 30.03|\varepsilon|^3(A+1)$ 
immediately imply $\lambda<2$. The same bound on $\lambda$ is valid
under the hypothesis $A=|\varepsilon|=2$.  

Our task is reduced to finding those 
real numbers satisfying the conditions in the following lemma. 
%%%%%%%
\begin{lem}\textup{(cf.~\cite[Lemma 3.1]{B})}\label{lem:RB}
Let $\theta_1,\theta_2$ be arbitrary real numbers and $\theta_0=1$. 
Assume that there exist positive real numbers $l,\,p,\,L$ and $P$ with $L>1$ 
such that for each positive integer $k$, we can find integers $p_{ijk}$ 
$(0 \leq i,j \leq 2)$ with nonzero determinant, 
\[
|p_{ijk}| \leq pP^k \enskip (0 \leq i,j \leq 2)
\]
and
\[
\left|\sum_{j=0}^2 p_{ijk}\theta_j \right| \leq l L^{-k}\enskip (0 \leq i \leq 2).
\]
Then
%%%%%
\[
\max\left\{\left|\theta_1-\frac{p_1}{q}\right|,
\left|\theta_2-\frac{p_2}{q}\right|\right\}>C q^{-\lambda}
\]
%%%%%
holds for all integers $p_1,p_2,q$ with $q>0$, where
%%%%%
\[
\lambda=1+\frac{\log P}{\log L}\quad {\it and}\quad
C^{-1}=4pP \left(\max\{1,2l\}\right)^{\lambda-1}.
\]
\end{lem}
%%%%%%%%%%%%%%%%%
%%%%%%%%%%%%%%%%%%%%%%%%
Consider the contour integral
\[
I_i(x)=\frac{1}{2\pi\sqrt{-1}}\int_{\gamma}\frac{(1+zx)^k(1+zx)^{1/2}}{(z-a_i)(F(z))^k}dz
\]
for $0 \leq i \leq 2$ and a positive integer $k$, where $a_0,\,a_1,\,a_2$ 
are distinct integers with $a_j=0$ for some $j$, $F(z)=(z-a_0)(z-a_1)(z-a_2)$ 
and $\gamma$ is a closed, counter-clockwise contour enclosing $a_0,a_1,a_2$. 
The integral can be expressed as
\[
I_i(x)=\sum_{j=0}^2p_{ij}(x)(1+a_jx)^{1/2}
\]
for $0 \leq i \leq 2$ with $p_{ij}(x) \in \bQ[x]$ of degree at most $k$ 
(cf.~\cite{R}). From the arguments following Lemma 3.1 in \cite{B} we see that 
\begin{align}\label{lL}
\left|\sum_{j=0}^2p_{ij}(1/N)\left(1+\frac{a_j}{N}\right)^{1/2}\right|<
\frac{27}{64}\left(1-\frac{A}{N}\right)^{-1}
\left\{\frac{27}{4}\left(1-\frac{A}{N}\right)^2N^3\right\}^{-k}
\end{align}
and
\begin{align}\label{pij1}
\left|p_{ij}(1/N)\right|\left(1+\frac{a_j}{N}\right)^{1/2} \leq 
\max_{z \in \Gamma_j}\frac{|1+z/N|^{k+1/2}}{|F(z)|^k},
\end{align}
where the contours $\Gamma_j$ are defined by 
\[
|z-a_j|=\min_{i \neq j}\left\{\frac{|a_j-a_i|}{2}\right\}.
\]
We now take $a_0=-\varepsilon A$, $a_1=0$, $a_2=\varepsilon$. 
Comparing the values of the right-hand side of (\ref{pij1}) in the 
twelve cases for $j \in \{0,1,2\}$ with $\varepsilon \in \{-2,-1,1,2\}$ 
shows that
\begin{align}\label{pij2}
|p_{ij}(1/N)| \leq \left(1+\frac{|\varepsilon|}{2(N+|\varepsilon|)}\right)^{1/2}
\left(\frac{8(1+3|\varepsilon|/(2N))}{|\varepsilon|^3(2A+1)}\right)^k
\end{align}
for all $j$. 
Moreover, the proof of Lemma 3.3 in \cite{R} enables us to write
\[
p_{ij}(1/N)=\sum_{ij}\left(\begin{array}{c}k+\frac{1}{2}\\ h_j \end{array}\right)
C_{ij}^{-1}\prod_{l \neq j}\left(\begin{array}{c}-k_{il}\\ h_l \end{array}\right),
\]
where 
\[
C_{ij}=\frac{N^k}{(N+a_j)^{k-h_j}}\prod_{l \neq j}(a_j-a_l)^{k_{il}+h_l}, 
\]
$k_{il}=k+\delta_{il}$ with $\delta_{il}$ the Kronecker delta, $\sum_{ij}$ 
denotes the sum over all non-negative integers $h_0,\,h_1,\,h_2$ satisfying 
$h_0+h_1+h_2=k_{ij}-1$, 
and $\prod_{l \neq j}$ denotes the product from $l=0$ to $l=2$ omitting $l=j$. 
Let $N=(A^2+A)K/2+\varepsilon A$. 
If $j=0$, then 
\[
|C_{i0}|=\frac{2^{k-h_0}N^kA^{k_{i1}+h_0+h_1-k}(A+1)^{k_{i2}+h_0+h_2-k}|\varepsilon|^{k_{i1}+k_{i2}+h_1+h_2}}{K^{k-h_0}}. 
\]
Thus we have $2^k|\varepsilon|^{3k}A^k(A+1)^kN^kC_{i0}^{-1}\in \bZ$. 
If $j=1$, then 
\[
|C_{i1}|=\frac{2^{k-h_1}N^kA^{k_{i0}+h_0+h_1-k}|\varepsilon|^{k_{i0}+k_{i2}+h_0+h_2}}{\left\{(A+1)K+2\varepsilon\right\}^{k-h_1}}, 
\]
which implies $2^k|\varepsilon|^{3k}A^kN^k C_{i1}^{-1}\in \bZ$. 
If $j=2$, then 
\[
|C_{i2}|=\frac{2^{k-h_2}N^k(A+1)^{k_{i0}+h_0+h_2-k}|\varepsilon|^{k_{i0}+k_{i1}+h_0+h_1}}{(AK+2\varepsilon)^{k-h_2}},
\]
which yields $2^k|\varepsilon|^{3k}A^k(A+1)^kN^kC_{i2}^{-1}\in \bZ$. 
Since 
\[
2^{2k-1}\left(\begin{array}{c}k+\frac{1}{2}\\ h_j \end{array}\right)\in \bZ
\]
for all $j$ (see the proof of Lemma 4.3 in \cite{R}), it is deduced from 
the proof of Theorem 2.5 in \cite{CF} that
\begin{align*}
p_{ijk}:=2^{-1}\{8|\varepsilon|^{3}A(A+1)N\}^k\,\Pi_2(k)^{-1}p_{ij}(1/N) \in \bZ,
\end{align*}
where $\Pi_2(k)$ is an integer satisfying $\Pi_2(k)>1.6^k/(4.09\cdot 10^{13})$. 
It follows from (\ref{lL}), (\ref{pij2}) with the assumptions $A \geq 3$, 
$K \geq 30.03|\varepsilon|^3(A+1)$ that 
\begin{align}\label{in:pPlL}
|p_{ijk}|<pP^k,\quad 
\left|\sum_{j=0}^2p_{ijk}\left(1+\frac{a_j}{N}\right)^{1/2}\right|<lL^{-k},
\end{align}
where
\begin{align*}
p&=2.045\cdot10^{13}\left(1+\frac{|\varepsilon|}{2(N+|\varepsilon|)}\right)^{1/2}<2.047\cdot10^{13},\\
P&=\frac{40A(A+1)N(1+3/(2N))}{2A+1}
<20(A+1)N,\\
l&=2.045\cdot10^{13}\cdot\frac{27}{64}\left(1-\frac{A}{N}\right)^{-1}<8.664\cdot10^{12},\\
L&=\frac{1.35}{|\varepsilon|^{3}A(A+1)}\left(1-\frac{A}{N}\right)^2N^2>
\frac{1.338N^2}{|\varepsilon|^{3}A(A+1)}.
\end{align*}
Inequality (\ref{in:pPlL}) with the above estimates on $p,\,P,\,l,\,L$ holds also for the case $A=|\varepsilon|=2$. 
Therefore, we may take $\lambda$  in Lemma \ref{lem:RB} as in the assertion
of Theorem \ref{thm:RB}, and 
\begin{align*}
C^{-1}&<4\cdot 2.047\cdot10^{13}\cdot20(A+1)N
\left(2\cdot 8.664\cdot10^{12}\right)^{\lambda-1}\\
      &<2.838\cdot10^{28}(A+1)N.
\end{align*}
This completes the proof of the theorem. 
\end{proof}
%%%%%%%%%%%%%%%%%

%-------------------------------------------------------------------%
\section{Auxiliary results for $\eps= -2$}\label{sec2}

For an arbitrary $D(4)$-quadruple $\{a,b,c,d\}$ there exist positive
integers verifying $ab+4=r^2$, $ac+4=s^2$, $bc+4=t^2$, $ad+4=x^2$, 
$bd+4=y^2$, $cd+4=z^2$. Elimination of $d$ yields a system of
generalized Pell equations
\beq{eq:7}
az^2-cx^2=4(a-c),
\eeq 
\beq{eq:8}
bz^2-cy^2=4(b-c).
\eeq 
By well-known structure theorem for solutions of such an equation,
there exist fundamental solutions $(x_0,z_0)$ and $(y_1,z_1)$ 
of~\eqref{eq:7} and~\eqref{eq:8}, respectively, such that 
$z=v_m=w_n$, where
\[
 v_0=z_0, \quad v_1=\tfrac{1}{2}(sz_0+cx_0), \quad v_{m+2}=sv_{m+1}-v_{m},
\]
\[
 w_0=z_1, \quad w_1=\tfrac{1}{2}(tz_1+cy_1), \quad w_{n+2}=tw_{n+1}-w_{n},
\]
and $\vert z_0 \vert <a^{-1/4} c^{3/4}$, $\vert z_1 \vert <b^{-1/4} c^{3/4}$.

The initial terms of these recurrent sequences are severely restricted.

\bel{lefil1} (\cite[Lemma 9]{Filipin-1})
Suppose  the equation $v_{m}=w_{n}$ holds for some nonnegative
integers $m$ and $n$.

\emph{(a)} If both $m$ and $n$ are even then $z_0=z_1$ and 
$\vert z_0 \vert =2$ or $\vert z_0 \vert =(cr-st)/2$ or
$\vert z_0 \vert < 1.608 a^{-5/14} c^{9/14}$.

\emph{(b)} If $m$ is odd and $n$ is even then $\vert z_0 \vert =t$,
$\vert z_1 \vert =(cr-st)/2$, and $z_0z_1 <0$.

\emph{(c)} If $m$ is even and $n$ is odd then $\vert z_1 \vert =s$,
$\vert z_0 \vert =(cr-st)/2$, and $z_0z_1 <0$. 

\emph{(d)} If both $m$ and $n$ are odd then $\vert z_1 \vert =s$,
$\vert z_0 \vert =t$, and $z_0z_1 >0$. 
\eel

We first note that the relationship between the two families
of $D(4)$-triples mentioned in Introduction is more than formal.

\bel{lemin1}
If $K$ is a divisor of $4$ then one has 
\[
\left( K, A^2K-4A,(A+1)^2K-4(A+1)\right)
 = \left( K,B^2K+4B,(B+1)^2K+4(B+1)\right)
\]
for $B=A-4/K$.
\eel

Lemma \ref{lemin1} allows us to assume either $K=3$ or $K \ge 5$, 
since the triples $\{K,A^2K+4A,(A+1)^2K+4(K+1)\}$ will be studied 
in the next section. Moreover, we may assume $A \ge 2$, since the 
family of $D(4)$-triples $\{K,K+4,4K+8\}$ is known to be uniquely 
extendable by~\cite{Fujita-3}.

Throughout this section we denote $a=K$, $b=A^2K-4A$, $c=(A+1)^2K-4(A+1)$,
$r=AK-2$, $s=(A+1)K-2$, $t=A(A+1)K-(4A+2)$. Note that one has $c=a+b+2r$,
which means that the triple $\{a,b,c\}$ is regular. It is equally easy
to check that the element $d$ given by~\eqref{d} coincides with 
$d_+:=a+b+c+2abc+2rst$, so that the quadruple $\{a,b,c,d\}$ is regular.

In the case we are interested in, more precise information on initial 
terms can be obtained.

\bel{leconi}
Suppose $\left( K, A^2K-4A,(A+1)^2K-4(A+1),d\right)$ is a
$D(4)$-quadruple, where $K$, $A$ are integers with $A \ge 2$ and
$K \ge 3$.  
Then any positive solution to the associated system 
of Pell equations satisfies $z=v_{2m}=w_{2n}$, with $x_0=y_1=2$
and $z_0=z_1=\pm 2$.
\eel \bep 
Assuming item (b)  or (d) of Lemma~\ref{lefil1} applies, it results
$bc <t^2 =z_0^2 <a^{-1/2} c^{3/2}$, whence $A^2 K <5A+1$, an
inequality which is incompatible with $A \ge 2$ and $K \ge 3$. 
If item (c) holds then one concludes that one has $a <b^{-1/2} c^{1/2}$, 
equivalently $A^2K^3-4AK^2-(A+1)^2K+4(A+1)<0$, which is false 
for parameters in the ranges $A \ge 2$ and $K \ge 3$.

So possibility (a) occurs. Since for the particular triple we are
studying one has $cr-st=4$, it remains to show that
one cannot have $\vert z_0 \vert < 1.608 a^{-5/14} c^{9/14}$.
Assuming the contrary, it results $\{a,(z_0^2-4)/c,b,c\}$ is a
$D(4)$-quadruple to which Proposition~1 in~\cite{Filipin-3}
applies, giving $ c> \min \{ 0.173 b^{13/2} a^{11/2}, 
0.087 b^{7/2} a^{5/2}\}$, which is obviously false.
\eep

%%%%%%%%%%%%%%%%%%%%%%%%%%%%%%
By Lemma \ref{leconi}, one can express any solution to Pellian equation (\ref{eq:8}) as $y=u_n'$, where
\begin{align}\label{cong:y}
u_0'=2,\quad u_1'=t \pm b,\quad u_{n+2}'=tu_{n+1}'-u_n'.
\end{align}
Any solution to the other Pellian equation 
\begin{align}\label{P:ab}
ay^2-bx^2=4(a-b)
\end{align}
deduced from (\ref{eq:7}) and (\ref{eq:8}) is given by $y=u_l''$, where
\begin{align}\label{cong:y'}
u_0''=y_2,\quad u_1''=\dfrac12(ry_2+bx_2),\quad u_{l+2}''=ru_{l+1}''-u_l''
\end{align}
with a solution $(y_2,x_2)$ to (\ref{P:ab}) satisfying
\begin{align}\label{fs:xyz}
|y_2|<\sqrt{\frac{b\sqrt{b}}{\sqrt{a}}}\quad \text{and}\quad 1 \le x_2 <\sqrt{b}.
\end{align}
Considering (\ref{cong:y}) and (\ref{cong:y'}) modulo $b$, 
we see that if $u_{2n}'=u_{2l}''$ has a solution, then $y_2 \equiv 2 \pmod{b}$, 
which together with (\ref{fs:xyz}) implies $y_2=2$ and $x_2=2$. 
Suppose that $u_{2n}'=u_{2l+1}''$ has a solution. 
Then, as seen in \cite[Section 5]{fht}, we have 
\begin{align}\label{eq:br}
bx_2-r|y_2|=4
\end{align}
and $bx_2+r|y_2|<2bx_2<2b\sqrt{b}$.
If $A \ge 3$ and $K \ge 3$, then $b \ge 9a-12 \ge 15$, which together with (\ref{fs:xyz}) yields
\begin{align*}
(bx_2-r|y_2|)(bx_2+r|y_2|)=4b(b-a)-4y_2^2>\frac{4b(8b-3\sqrt{3b}-12)}{9}.
\end{align*}
Hence, we obtain
\[
bx_2-r|y_2|>\frac{2(8b-3\sqrt{3b}-12)}{9\sqrt{b}}>5,
\]
which contradicts (\ref{eq:br}). 
Similarly, in case $A=2$ and $K \ge 6$, we will arrive at a contradiction. 
We have thus showed the following.
%%%
\bel{lem:xy2}
Suppose $\left( K, A^2K-4A,(A+1)^2K-4(A+1),d\right)$ is a
$D(4)$-quadruple, where $K$, $A$ are integers with either $A \ge 3$ and $K \ge 3$ or $A=2$ and $K \ge 6$.
Then any positive solution to the associated system 
of Pell equations satisfies $y=u_{2n}'=u_{2l}''$, with $x_2=y_2=2$.
\eel

Throughout the rest of this section, suppose that either of the following holds:
\begin{itemize}
\item $A \ge 3$ and either $K=3$ or $K \ge 5$;
\item $A =   2$ and $K \ge 6$.
\end{itemize}
Lemmas \ref{leconi} and \ref{lem:xy2} enable us to express any solution to the 
system of Pellian equations (\ref{eq:7}) and (\ref{P:ab}) as $x=W_{2m}=V_{2l}$, where
\begin{align*}
W_0=2,\quad & W_1=s\pm a,\quad W_{m+2}=sW_{m+1}-W_m,\\
V_0=2,\quad & V_1=r+a,\quad V_{l+2}=rV_{l+1}-V_l.
\end{align*}
%%%%%%%%%%%%%%%%%%%%%%%%%%

Put  
\[
\alpha =\frac{s+\sqrt{ac}}{2}, \quad \beta =\frac{r+\sqrt{ab}}{2},
\quad \chi =\frac{\sqrt{bc} + \sqrt{ac}}{\sqrt{bc} \pm \sqrt{ab}}. 
\]
%%%%%%%%%%%%%%%%%%%%%%%%%
Then, in a fashion similar to Lemma 10 in \cite{fht}, one finds 
that if $m \ge 1$, then 
\begin{align}\label{in:L}
0<\Lambda<\alpha^{1-4m},
\end{align}
where $\Lambda=2l\log \beta-2m\log \alpha+\log \chi$. 
%%%%%%%%%%%%%%%%%%%%%%%%%

\bel{le1}
$\alpha -\beta >K=s-r.$
\eel
\bep This is equivalent to $\sqrt{s^2-4} > K+\sqrt{r^2-4}$.
Squaring this, one arrives at the obvious inequality
$r> \sqrt{r^2-4}$.
\eep

\bel{le4}
$\displaystyle  \frac{1}{A+1} < \sqrt{\frac{a}{c}} < 
\log \frac{\alpha}{\beta}<  \sqrt{\frac{a}{b}} < 
\frac{1}{A-1}$.
\eel
\bep
From the mean value theorem one gets
\[
\log \alpha - \log\beta =\frac{s-r}{\sqrt{\xi ^2-4}}
\quad \mbox{for some $\xi$ satisfying $r < \xi < s$.}
\]
The claim follows after elementary computations, using
the explicit formulas for $s$ and $r$.
\eep

Equally simple computations yield the following.

\bel{le2} For  $AK \ge 34$ one has $\beta > 0.999\,  r$.
\eel

\bel{le5}
Let $\rho$ be a positive integer. Then for $AK\ge 2\rho +4$
one has 
\[
c-a \le  b+\frac{(2\rho +2)b}{\rho A}.
\]
\eel \bep 
The claim is equivalent to $\rho rA \le (\rho +1)b$, which,
on using the explicit formulas for $r$ and $b$, turns out
to be precisely $AK\ge 2\rho +4$.
\eep

\bel{le6}
Let $A_0\ge 2$, $K_0\ge 3$, and $\rho\ge 14$ be integers. If
$A\ge A_0$, $K\ge K_0$, and  $AK\ge 2\rho +4$ then 
\[
bc^2(c-a) <  0.992^{-1} \left( 1+\frac{ 2\rho +2}{\rho A_0} \right)
\left( \frac{1}{K_0}+\frac{1}{ 2\rho +2} \right)^4  \beta ^{8}.
\]
\eel
\bep
Notice that one has
\[
K^2bc=(r^2-4) (s^2-4) < r^2s^2=K^2r^4 \left( \frac{1}{K}+ \frac{1}{r}
\right)^2
\]
and, by the previous lemma,
\[
bc^2(c-a) \leq  \left( 1+\frac{ 2\rho +2}{\rho A_0} \right) b^2c^2 
< \left( 1+\frac{ 2\rho +2}{\rho A_0} \right)
\left( \frac{1}{K_0}+\frac{1}{ 2\rho +2} \right)^4  r^8,
\]
while Lemma~\ref{le2} yields
\[
\beta ^{8} > (0.999\,  r)^{8} > 0.992\, r^8.
\]
\eep

\bel{le7}
$\displaystyle \frac{\sqrt{bc}+\sqrt{ac}}{\sqrt{bc} - \sqrt{ab}}
< 1+\frac{5}{2A}$ if one of the following holds:
\[
\biggl\{\begin{array}{l}K=3,\\ A \ge 6,\end{array}
\quad \biggl\{\begin{array}{l}K=5,\\ A \ge 5,\end{array}
\quad \biggl\{\begin{array}{l}6 \le K \le 11,\\ A \ge 4,\end{array}
\quad \biggl\{\begin{array}{l}K \ge 12,\\ A \ge 3.\end{array}
\]
\eel
\bep
The desired inequality is equivalent to 
$2A\sqrt{ac}+(2A+5)\sqrt{ab} < 5\sqrt{bc}$.
Squaring this and replacing $\sqrt{bc}$ by the larger quantity 
$t$, we arrive at a bivariate polynomial inequality which is
easily seen to hold in each of the cases displayed above. 
\eep

%%%
By rewriting the linear form considered above in the form
\[
\Lambda = \log(\beta^{2\nu}\chi)-2m \log(\alpha/\beta),
\]
one may obtain a lower bound for $m$. 
%%%
\bel{le9} If $\nu = l-m$ with $m \ge 1$, then $m > (A-1)\nu \log\beta $.
\eel
\bep
Estimate (\ref{in:L}) implies
\[
-\alpha^{1-4m}+\log(\beta^{2\nu}\chi)<2m\log(\alpha/\beta)<\log(\beta^{2\nu}\chi).
\]
Since it is not difficult to check $\log \chi > \alpha^{1-4m}$, 
one has $m \log(\alpha/\beta)>\nu \log \beta$. 
The asserted inequality now follows from Lemma~\ref{le4}. 
\eep
%%%%%

%
%-------------------------------------------------------------------%
\section{Auxiliary results for $\eps=2$}\label{sec2p2}

In this section we keep the notation
\[
\alpha =\frac{s+\sqrt{ac}}{2}, \quad \beta =\frac{r+\sqrt{ab}}{2},
\quad \chi =\frac{\sqrt{bc} + \sqrt{ac}}{\sqrt{bc} \pm \sqrt{ab}}.
\]

Results similar to those given in the previous section hold
for these algebraic numbers. The proofs contain no new ideas,
the differences appear in the numerical details. Therefore, we
avoid annoying repetitions by omitting the proofs.

\bel{le1p2}
$\alpha -\beta >K=s-r.$
\eel

\bel{le2p2}
One always has $\alpha > 0.998\, s$ and $\beta > 0.998\, r$.
Moreover,  for $AK\ge 30$ one has  $\alpha > 0.999\,  s$ and
$\beta > 0.999\,  r$.
\eel

\bel{le3p2}
For $AK\ge 43$ one has $\sqrt{ab}  > 0.999 \, r$,  $\sqrt{ac}
> 0.999\,  s$, and $0.999\, (\sqrt{ac} - \sqrt{ab}) < K$.
Moreover, for any $A\ge 23$ it holds $\sqrt{bc}  > 0.999 \, t$.
\eel

\bel{le4p2}
$\displaystyle  \frac{1}{A+1+2/K} < \log \frac{\alpha}{\beta}<
\frac{1}{A}$.
\eel

\bel{le5p2}
$\displaystyle  c-a < \left( 1+\frac{2}{A} \right)b$.
\eel

\bel{le6p2}
Let $A_0\ge 1$, $K_0\ge 1$, and $\rho\ge 15$ be integers. If
$A\ge A_0$, $K\ge K_0$, and  $AK\ge 2\rho $ then 
\[
bc^2(c-a) <  0.992^{-1} \left( 1+\frac{ 2}{ A_0} \right)
\left( \frac{1}{K_0}+\frac{1}{ 2\rho +2} \right)^4  \beta ^{8}.
\]
\eel

\bel{le7p2}
$\displaystyle \frac{\sqrt{bc}+\sqrt{ac}}{\sqrt{bc} - \sqrt{ab}}
< 1+\frac{5}{2A}$.
\eel

%%%%
\bel{le9p2} 
If $\nu = l-m$ with $m \geq 1$, then $m > A \nu \log\beta $.
\eel

%%%%%%%%%%%%%%%%%%%%%%%%%%%%%%%%%%%%%%%%%
\section{Application of the hypergeometric method to the case $|\varepsilon|=2$}\label{sec:hg2}
%%%%%%%%%%%%%%%%%%%%%%%%%%%%%%%%%%%%%%%%%

The hypergeometric method is very effective when dealing with
small values of $A$. For the rest of the section we put
\[
 N=\frac{1}{2}(A^2+A)K+\varepsilon A, \quad \theta_1 =\sqrt{1-\frac{\varepsilon A}{N}},
\quad \theta_2 =\sqrt{1+\frac{\varepsilon}{N}}
\]
with $\varepsilon \in \{-2,2\}$. 

\bel{le11p2}  
Let $(x,y,z)$ be a  solution in positive integers
to the system of Diophantine equations~\eqref{eq:7} and \eqref{eq:8}. Then
\[
 \max \left\{  \left| \theta _1 -\frac{(A+1)x}{z}\right|,
 \left| \theta _2 -\frac{(A+1)y}{Az}\right|
\right\} <2(A+1)(A+1+2\cdot K^{-1})z^{-2}.
\]
\eel
\bep
Follow the proof of Lemma~6 from~\cite{fht} with a twist on the
final step --- use $A+1+2\cdot K^{-1}$ instead of $A+3$ as an 
upper bound for $\sqrt{c/a}$.
\eep

A lower bound for the left side of the inequality in the previous 
lemma can be obtained by using results on simultaneous approximations 
of algebraic numbers which are close to 1.

As already mentioned, we study small values of $A$ with
the help of the hypergeometric method. The next result
contains the outcome of the study.

%%%%%%%%%%%%%%%%%%%%%%%%%%%%%%%%%%%%%%%%%%%%%%%%
\begin{prop}\label{prop:hg2}
Let $a=K$, $b=A^2K+2\varepsilon A$, $c=(A+1)^2K+2\varepsilon(A+1)$ with 
$\varepsilon \in \{-2,2\}$ and positive integers $A$, $K$. 
Suppose that $\{a,b,c,d\}$ is a $D(4)$-quadruple with $d>2$ not 
given by~\eqref{d}. 
If $A \ge A_0$, then $K<240.24(A+1)+K_0$, where
\[
(A_0,K_0) \in \{(1326,0),(454,1000),(3,23000),(2,210000)\}.
\]
\end{prop}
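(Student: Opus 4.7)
The plan is to derive the stated bound by contraposition, combining the Rickert-type lower bound from Theorem~\ref{thm:RB} with the upper bound on simultaneous approximations coming from a putative second extension. Assume $\{a,b,c,d\}$ is a $D(4)$-quadruple with $d>2$ and $d\neq d_+$, fix $A\ge A_0$, and suppose for contradiction that $K\ge 240.24(A+1)+K_0$. Let $(x,y,z)$ be the positive integers associated with $d$ via $ad+4=x^2$, $bd+4=y^2$, $cd+4=z^2$. With common denominator $q=Az$, Lemma~\ref{le11p2} gives
\[
 \max\left\{\left|\theta_1-\tfrac{A(A+1)x}{q}\right|,\left|\theta_2-\tfrac{(A+1)y}{q}\right|\right\} <\frac{2A^2(A+1)\bigl(A+1+2K^{-1}\bigr)}{q^2},
\]
and the hypothesis $K\ge 240.24(A+1)=30.03|\varepsilon|^3(A+1)$ is exactly what lets me invoke Theorem~\ref{thm:RB}, producing the competing lower bound $(2.838\cdot10^{28}(A+1)N)^{-1}q^{-\lambda}$. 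Combining them yields
\begin{equation}\label{eq:plancomb}
 q^{2-\lambda}<5.68\cdot10^{28}\,A^2(A+1)^2\bigl(A+1+2K^{-1}\bigr)\,N.
\end{equation}

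Next I need a lower bound on $q=Az$ strong enough to clash with \eqref{eq:plancomb}. For $\varepsilon=-2$, Lemmas~\ref{leconi} and~\ref{lem:xy2} pin $z$ inside a doubly-indexed sequence $v_{2m}=w_{2n}$; the trivial match $m=n=0$ produces $d=0$, and the first nontrivial match $m=n=1$ produces precisely $d=d_+$. For $\varepsilon=2$ the analogous Pell analysis (parallel to Section~\ref{sec2} but informed by the estimates of Section~\ref{sec2p2}) forces the same dichotomy. The standing assumptions $d>2$ and $d\neq d_+$ therefore force $m\ge 2$, and the even-indexed recurrence $v_{2m+2}=(s^2-2)v_{2m}-v_{2m-2}$ yields
\[
 q\ge A\,v_4> A(s^2-3)\,z_+ \gg A^2(A+1)^4 K^4,
\]
where the last estimate uses $z_+\gg A(A+1)^2 K^2$ coming from $z_+^2=c\,d_++4$ and the explicit shape of $d_+$. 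Substituting this lower bound into \eqref{eq:plancomb} together with $N<A(A+1)K$ reduces the claim to an explicit polynomial inequality in the single real variable $K\ge 240.24(A+1)+K_0$ (with $A$ treated as a parameter).

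The heart of the argument is the behaviour of $2-\lambda$. Writing $\eta=N/(119.58\,A(A+1)^2)$, Theorem~\ref{thm:RB} gives $2-\lambda=\log\eta/\log L$, so $\lambda\nearrow 2$ as $K$ approaches the Rickert threshold $\approx 239.16(A+1)$ from above, and \eqref{eq:plancomb} degenerates there. This is exactly what makes the positive gap $K_0$ unavoidable. For $A$ large enough, $\log L$ grows fast enough with $K$ that the contradiction is reached already at $K_0=0$; as $A$ shrinks, the required $K_0$ swells, which explains why the proposition records four distinct pairs $(A_0,K_0)$. The main obstacle is the quantitative calibration of this tradeoff: I would encode \eqref{eq:plancomb} and the lower bound on $q$ in a short \texttt{gp} routine that, for each candidate pair, treats $K$ as a free variable $\ge 240.24(A+1)+K_0$ and certifies inconsistency for every $A\ge A_0$, shrinking $K_0$ as far as the numerics permit to produce the listed optima $(1326,0)$, $(454,1000)$, $(3,23000)$, $(2,210000)$.
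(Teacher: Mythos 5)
Your setup (Lemma~\ref{le11p2} against Theorem~\ref{thm:RB} with $q=Az$, the combined inequality $q^{2-\lambda}<5.68\cdot10^{28}A^2(A+1)^2(A+1+2K^{-1})N$, and the observation that $2-\lambda$ degenerates as $K$ approaches the Rickert threshold) matches the paper. The genuine gap is in the lower bound you feed into this inequality. You only use $m\ge 2$, which gives a \emph{fixed polynomial} bound $q\gg A^2(A+1)^4K^4$, i.e.\ $\log q$ of size a few tens. But near $K\approx 240.24(A+1)$ one has $2-\lambda=\log\bigl(N/(119.58A(A+1)^2)\bigr)/\log L$ of order $10^{-3}$--$10^{-4}$, so $(2-\lambda)\log q$ is a small fraction of $1$, while the logarithm of the right-hand side exceeds $\log(10^{28})\approx 65$. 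There is no contradiction at all in the range you claim; your inequality only becomes violated when $\log K$ itself is comparable to $\log(10^{28}\cdots)/(2-\lambda)$-type quantities, i.e.\ it would yield an astronomically large bound on $K$, not $K<240.24(A+1)+K_0$ with the listed small $K_0$. So the proposed gp verification would simply fail to certify inconsistency for the stated pairs $(A_0,K_0)$.

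What the paper uses instead, and what your argument is missing, is the gap principle of Lemmas~\ref{le9} and~\ref{le9p2}: writing $\nu=l-m\ge 1$, one has $m>(A-1)\nu\log\beta$ (resp.\ $m>A\nu\log\beta$), combined with the growth estimate $\log z>2m\log((A+1)K+\varepsilon-2)$ from \cite{fht}. This makes $\log z$ at least of size $2(A-1)\nu\log\beta\cdot\log((A+1)K+\varepsilon-2)$, which grows with both $A$ and $K$ fast enough to compensate the small factor $2-\lambda$; dividing through, one gets inequality (\ref{in:KA}), whose left side increases and whose right side decreases in $K$, and the computer check at $\nu\ge1$ then produces exactly the thresholds $(1326,0)$, $(454,1000)$, $(3,23000)$, $(2,210000)$. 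Your index bound $m\ge2$ (even granting the identification of $m=n=1$ with $d_+$, and the unproved parallel Pell analysis for $\varepsilon=+2$) cannot be repaired quantitatively without some such $m\gtrsim A\log\beta$ input; with it, your scheme essentially becomes the paper's proof.
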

%%%%%%%%%%%
\begin{proof}
Suppose that $K \geq 240.24(A+1)$. 
On applying Lemma~\ref{le11p2} and Theorem~\ref{thm:RB} with $p_1=A(A+1)x$, 
$p_2=(A+1)y$, $q=Az$, $N=(A^2+A)K/2+\varepsilon A$, one gets
\beq{in:hg2}
z^{2-\lambda} < 2C^{-1} A^{\lambda} (A+1)(A+1+2\cdot K^{-1}),
\eeq 
where $C^{-1}=2.838\cdot 10^{28}(A+1)N$.  
It is easy to see from the proof of Lemma 5 in \cite{fht} that 
\beq{in:z>m}
\log z > 2m \log((A+1)K+\varepsilon-2).
\eeq
%%%
The assumption $K \ge 240.24(A+1)$ ensures $\lambda<2$, which, 
combined  with Lemmas \ref{le9}, \ref{le9p2} and
inequalities (\ref{in:hg2}), (\ref{in:z>m}), implies
\begin{align}\label{in:KA}
(A-1)\nu \log \beta<\frac{\log(2C^{-1}A^2(A+1)(A+1+2/K))}{2(2-\lambda)\log((A+1)K+\varepsilon-2)}.
\end{align}
Since
\[
2-\lambda=\frac{\log\left(\frac{0.669\, N}{80A(A+1)^2}\right)}{\log\left(\frac{0.669\, N^2}{4A(A+1)}\right)}
         =\frac{\log\left(\frac{0.669\, \{(A+1)K+\varepsilon-2\}}{160(A+1)^2}\right)}{\log\left(\frac{0.669\, A\{(A+1)K+\varepsilon-2\}^2}{16(A+1)}\right)},
\]
the right-hand side of (\ref{in:KA}) is a decreasing function of $K$. 
Therefore, one can easily verify the assertion by using (\ref{in:KA}) 
with $\nu \ge 1$ and a computer.
\end{proof}
%%%%%%%%%%%%%%%%%%%%%%%%%%%%%%%%%

%%%%%%%%%%%%%%%%%%%%
\section{Application of Baker's method to the case $|\varepsilon|=2$}\label{sec:baker2}
%%%%%%%%%%%%%%%%%%%%
\begin{prop}\label{prop:2log}
Let $a=K$, $b=A^2K+2\varepsilon A$, $c=(A+1)^2K+2\varepsilon(A+1)$ with 
$\varepsilon \in \{-2,2\}$ and positive integers $A$, $K$. 
Suppose that $\{a,b,c,d\}$ is a $D(4)$-quadruple with $d>2$ not 
given by~\eqref{d}. 
Then, we have 
\[
A \le \begin{cases} 2800 & \text{if $\varepsilon=-2;$}\\
                    3365 & \text{if $\varepsilon=2$.}
      \end{cases}
\]
\end{prop}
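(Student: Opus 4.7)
The plan is to apply Laurent's sharp bound on linear forms in two logarithms~\cite{lau} to the quantity
\[
\Lambda=\log(\beta^{2\nu}\chi)-2m\log(\alpha/\beta),
\]
introduced in the discussion preceding Lemma~\ref{le9}, where $\nu=l-m\ge 0$. Viewed as a $\ZZ$-linear combination of $\log\gamma_1$ and $\log\gamma_2$ with $\gamma_1=\beta^{2\nu}\chi$, $\gamma_2=\alpha/\beta$ and integer coefficients $1$ and $-2m$, this is a linear form in two logarithms of algebraic numbers belonging to a field of degree at most $8$; multiplicative independence of $\gamma_1,\gamma_2$ is clear from the structure of the fields they generate. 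From~(\ref{in:L}) one has the Pellian upper estimate $\log\Lambda<(1-4m)\log\alpha$.

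To run Laurent's theorem I need the absolute logarithmic Weil heights of $\gamma_1$ and $\gamma_2$. Since $\alpha$ and $\beta$ are units of norm $\pm 1$ in the real quadratic fields $\QQ(\sqrt{ac})$ and $\QQ(\sqrt{ab})$ respectively, one has $h(\alpha)=\tfrac12\log\alpha$ and $h(\beta)=\tfrac12\log\beta$, so $h(\gamma_2)\le\tfrac12(\log\alpha+\log\beta)$. A short computation yields
\[
h(\gamma_1)\le \nu\log\beta+\tfrac14\log\bigl(bc^2(c-a)\bigr)+O(1),
\]
and Lemmas~\ref{le6} and~\ref{le6p2} dominate the second term by $2\log\beta+O(1)$. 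Feeding these into Laurent's bound gives an inequality of the form $-\log|\Lambda|\le C\cdot h(\gamma_1)h(\gamma_2)(\log B')^2$, where $B'$ is an explicit quantity of the order $m/h(\gamma_1)$.

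Comparing the two bounds on $\log\Lambda$ produces a relation of the form $4m\le C'(\nu\log\beta+\log\alpha)(\log m)^2$ with an explicit $C'$. The auxiliary variable $\nu$ is then absorbed on the right using the lower bounds $m>(A-1)\nu\log\beta$ of Lemma~\ref{le9} and $m>A\nu\log\beta$ of Lemma~\ref{le9p2}. A matching upper bound on $m$ is extracted from the hypergeometric inequality~(\ref{in:hg2}) in the proof of Proposition~\ref{prop:hg2} combined with~(\ref{in:z>m}), and Proposition~\ref{prop:hg2} itself permits the substitution $K\le 240.24(A+1)+K_0$. Everything then collapses to a single-variable inequality in $A$, whose numerical solution delivers the announced cut-offs $A\le 2800$ for $\varepsilon=-2$ and $A\le 3365$ for $\varepsilon=2$. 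The degenerate case $\nu=0$ is handled in parallel: the linear form reduces to $\log\chi-2m\log(\alpha/\beta)$, the height $h(\gamma_1)$ is then dominated by $\log\alpha$, and the same combined argument applies with only the constants modified.

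The main obstacle is the delicate numerical tuning of Laurent's theorem: its free parameters must be chosen carefully to squeeze the constants down sufficiently, and each of the four sub-regimes identified in Proposition~\ref{prop:hg2} by the pairs $(A_0,K_0)$ has to be processed separately, each with its own optimal parameters. A modest amount of computer assistance is expected, principally to certify the height bound for $\chi$ and to verify the final polynomial--logarithmic inequality in $A$.
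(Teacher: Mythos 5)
Your overall strategy (Laurent's two--logarithm bound applied to $\Lambda=\log(\beta^{2\nu}\chi)-2m\log(\alpha/\beta)$, with the height of $\chi$ controlled via $\hh(\chi)\le\frac14\log(bc^2(c-a))$ and Lemmas~\ref{le6}, \ref{le6p2}, and with Lemmas~\ref{le9}, \ref{le9p2} converting the outcome into a bound on $A$) is exactly the paper's. The genuine gap is in how you close the argument: you make the final inequality depend on ``a matching upper bound on $m$'' extracted from the hypergeometric inequality~(\ref{in:hg2}) together with~(\ref{in:z>m}), plus the substitution $K\le 240.24(A+1)+K_0$ from Proposition~\ref{prop:hg2}. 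But inequality~(\ref{in:hg2}) is derived under the hypothesis $K\ge 240.24(A+1)$ (this is the hypothesis $K\ge 30.03|\varepsilon|^3(A+1)$ of Theorem~\ref{thm:RB} for $|\varepsilon|=2$, and it is explicitly assumed at the start of the proof of Proposition~\ref{prop:hg2}); by Proposition~\ref{prop:hg2} itself, the pairs $(A,K)$ you still have to treat are precisely (up to the small windows of width $K_0$) those with $K<240.24(A+1)$, where no hypergeometric bound on $m$ exists. So the step ``everything collapses to a single-variable inequality in $A$'' is not established on the range that matters, and your plan as written does not prove the proposition. The paper needs no such input: since the Pellian estimate~(\ref{in:L}) bounds $(4m-1)\log\alpha$ and Laurent's $a_1$ is proportional to $\log\alpha$, the factor $\log\alpha$ cancels, and Laurent's theorem (with $\rho=37$, $\mu=0.63$, and a case split on the size of $h$) yields directly an inequality of the shape $m<70\,(40\nu+q)\log\beta$; comparing with $m>(A-1)\nu\log\beta$ (resp.\ $m>A\nu\log\beta$) gives $A\le 2800$ (resp.\ $3365$) with no reference to $K$, to $z$, or to the hypergeometric method. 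If you insist on your formulation, note that $\log\alpha<\log\beta+1/(A-1)$ by Lemma~\ref{le4}, so the $\log\alpha$ term can be absorbed into $\nu\log\beta$ and the extra inputs are simply unnecessary.

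A second, quantitative problem: the proposition asserts the specific cut-offs $2800$ and $3365$, and these come from sharp choices that your write-up gives away. The field $\bQ(\sqrt{ab},\sqrt{ac})$ containing $\alpha$, $\beta$, $\chi$ has degree four, not ``at most $8$'', and $\hh(\alpha/\beta)=\frac12\log\alpha$ exactly (since $\alpha,\beta$ are units), not merely $\le\frac12(\log\alpha+\log\beta)$; for $\varepsilon=-2$ one even has $\hh(\chi)<0.014\log\beta$, far below the generic $2\log\beta+O(1)$ you quote. With your weaker height and degree estimates the constants in Laurent's bound roughly double, so the announced numerical thresholds would not follow from your computation; no amount of tuning of Laurent's free parameters recovers a loss in $D$ or in the heights. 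You should therefore fix the degree, use the exact unit heights, and drop the hypergeometric crutch before the numerical verification can legitimately deliver $A\le 2800$ and $A\le 3365$.
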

%%%%%%%%%%%%%%%%%%%%
\begin{proof}
Recall that
\[
\alpha = \frac{s+\sqrt{ac}}{ 2}, \quad \beta=\frac{r+\sqrt{ab}}{ 2}, 
\quad \chi = \frac{\sqrt{bc} + \sqrt{ac}}{ \sqrt{bc}\pm \sqrt{ab}}.
\]
All these algebraic numbers belong to the number field (of degree four) 
${\bQ}(\sqrt{ab},\sqrt{ac})$, whose ${\bQ}$-automor\-phisms are defined by
$(\sqrt{ab},\sqrt{ac})\mapsto (e_1 \sqrt{ab},e_2 \sqrt{ac})$, 
where $e_1$, $e_2\in \{-1,+1\}$. It follows that the conjugates of 
$\chi $ are $\chi $ and
\[
\chi ' = \frac{\sqrt{bc} + \sqrt{ac}}{\sqrt{bc}\mp \sqrt{ab}}, 
\quad \chi '' = \frac{\sqrt{bc} - \sqrt{ac}}{ \sqrt{bc}\pm \sqrt{ab}}, 
\quad \chi ''' = \frac{\sqrt{bc} - \sqrt{ac}}{\sqrt{bc}\mp \sqrt{ab}}.
\]
Hence
\[
0 < \chi  \chi ''' = \chi ' \chi '' = \frac{bc -ac}{ bc -ab} <1
\]
and
\[
0 <  \chi , \, \chi ', \, \chi '', \, \chi ''', \, (1-\chi)(1-\chi ') 
, \,  (1-\chi '')(1-\chi ''').
\]
This shows that $(bc-ac)^2$ is a denominator for $\chi $ and that
\[
\hh(\chi ) \le \frac{1}{ 4} \left( \log(b^2(c-a)^2) + \log%
\frac{c(\sqrt a +\sqrt b)^2}{ b(c-a)}\right)
 = \frac{1}{ 4} \log \bigl(b c (c-a) (\sqrt a +\sqrt b)^2 \bigr).
\]
Here  $c=a+b + 2r$, so that $c > (\sqrt a +\sqrt b)^2$ and 
\[
\hh(\chi ) \le   \frac{1}{ 4} \log \bigl(b c^2(c-a)  \bigr).
\]
Now we see that, for $A\ge 80$, Lemmas \ref{le6} and \ref{le6p2} yield
\[
\hh(\chi ) <  \begin{cases}0.014 \log \beta & \text{if $\varepsilon=-2$},\\
                           2.005 \log \beta & \text{if $\varepsilon=2$}.
              \end{cases}
\]
The conjugates of $\alpha/\beta$ are $\alpha/\beta$ and
\[
\frac{s -\sqrt {ac}}{ r +\sqrt{ab}}, \quad 
\frac{s +\sqrt {ac}}{ r -\sqrt{ab}}, \quad 
\frac{s -\sqrt {ac}}{r -\sqrt{ab}}.
\]
As among these four numbers only the first and the third ones are of 
modulus greater than $1$, it easily follows that
\[
\hh(\alpha /\beta)= \frac{1}{2} \log \alpha,
\]
because $\alpha$ and $\beta$ are algebraic units. Moreover since 
$\chi $ is obviously not a unit, the numbers $\beta^{2\nu} \chi $ 
and $\alpha/\beta$ are multiplicatively independent. Now we are 
ready to apply Laurent's lower bounds~\cite{lau} to the linear form
\[
\Lambda = \log (\beta^{2\nu} \chi ) - 2m \log(\alpha/\beta).
\]
With the notation of~\cite{lau} we have
\[
b_1=2m, \quad b_2=1, \quad \alpha_1 = \alpha/\beta, \quad \alpha_2= \beta^{2\nu} \chi .
\]
Using the above study, and the inequality $\log \alpha_1 >1/(A+1+2/K)$ 
following from Lemmas \ref{le4} and \ref{le4p2}, one can choose
\[
a_1 \ge 4 \log \alpha +\frac{\rho-1}{A+1+2/K} 
\]
and, in view of Lemmas \ref{le7} and \ref{le7p2}, the choice
\[
a_2 \ge \bigl( 2\nu (\rho+3)+q_2\bigr) \log \beta + (\rho-1)% 
\log \left( 1+\frac{5}{ 2A}\right)
\]
is legitimate for $A\ge 80$, where $q_2=0.112$ or $16.04$ depending 
on $\varepsilon=-2$ or $2$, respectively.

By way of illustration, we present the details in case
$\rho =37$, $\mu =0.63$. We shall also suppose that $A \ge 2700$.
Then we may take
\begin{align*}
a_1 & = 4.0017 \log \alpha, \\
a_2 & = (80\nu +q_2') \log \beta ,
\end{align*}
where $q_2'=0.116$ or $16.045$ depending on $\varepsilon=-2$ or $2$, respectively. 
From $\alpha>\beta$ we then get
\begin{align*}
\frac{b_1}{a_2}+ \frac{b_2}{a_1} & =
\frac{2m}{(80\nu +q_2') \log \beta} +
\frac{1}{4.0017 \log \alpha} \\
& <  \frac{m+10\nu +q_2'/8}{(40\nu + q_2'/2) \log \beta},
\end{align*}
which implies that
\[
h  = 4\log \left( \frac{m+10\nu + q_2'/8}{(40\nu + q_2'/2)\log \beta} \right) +11.913
\]
satisfies the hypotheses of Laurent's theorem.

Suppose $h \le 28.9$. 
If $\varepsilon=-2$, then it results from Lemma~\ref{le9}
\[
(A-1)\nu \log\beta < m <  (40\nu + 0.058) \exp (4.24675)
 \log \beta,
\]
that is, 
\begin{align}\label{A<:-2}
 A < \left(40 + \frac{0.058}{\nu}\right)\exp(4.24675)+1.
\end{align}
%%%
Similarly, if $\varepsilon=2$, then Lemma~\ref{le9p2} implies
\begin{align}\label{A<:2}
A< \left(40+ \frac{8.0225}{\nu}\right)\exp(4.24675).
\end{align}

Suppose $h>28.9$. 
Combining inequality (\ref{in:L}) with Theorem~2 from~\cite{lau} yields
\begin{align}\label{in:Laurent}
(4m-1)\log \alpha<
C\left(h+\frac{\lambda}{\sigma}\right)^2a_1a_2+\sqrt{\omega \theta}\left(h+\frac{\lambda}{\sigma}\right)+\log\left(C'\left(h+\frac{\lambda}{\sigma}\right)^2a_1a_2\right),
\end{align}
where 
\begin{align*}
\sigma&=\frac{1+2\mu-\mu^2}{2},\quad \lambda=\sigma \log \rho,\\
\omega&=2\left(1+\sqrt{1+\frac{1}{4H^2}}\right),\quad \theta=\sqrt{1+\frac{1}{4H^2}}+\frac{1}{2H},\\
h & \ge \max \left\{4\left(\log \left(\frac{b_1}{a_2}+\frac{b_2}{a_1}\right)+\log \lambda+1.75\right)+0.06,~\lambda,~2\log2\right\},\\
H&=\frac{h}{\lambda}+\frac{1}{\sigma},\\
C&=\frac{\mu}{\lambda^3\sigma}\left(\frac{\omega}{6}+\frac12\sqrt{\frac{\omega^2}{9}+\frac{8\lambda \omega^{5/4}\theta^{1/4}}{3\sqrt{a_1a_2}H^{1/2}}+\frac43\left(\frac{1}{a_1}+\frac{1}{a_2}\right)\frac{\lambda \omega}{H}}\,\right)^2,\\
C'&=\sqrt{\frac{C\sigma \omega \theta}{\lambda^3\mu}}.
\end{align*}
If $\varepsilon=-2$, then inequality (\ref{in:Laurent}) shows that 
\[
\frac{m}{(40\nu+0.058)\log \beta}<69.799,
\]
which together with Lemma \ref{le9} implies 
\begin{align}\label{A<:-2'}
A < 69.799\left(40+\frac{0.058}{\nu}\right)+1.
\end{align}
Inequalities (\ref{A<:-2}) and (\ref{A<:-2'}) together yield $A \le 2800$ for all $\nu \ge 1$. 
If $\varepsilon=2$, then inequality (\ref{in:Laurent}) 
and Lemma \ref{le9p2} together show that 
\begin{align}\label{A<:2'}
A < 70.073\left(40+\frac{8.0225}{\nu}\right),
\end{align}
with which combining (\ref{A<:2}) implies $A \le 3365$ for all $\nu \ge 1$. 

\end{proof}
%%%%%%%%%%%%%%%%%%%%%%%%%
\section{Proof of Theorem $\ref{thm:main}$}\label{sec:pr}
%%%%%%%%%%%%%%%%%%%%%%%%%
Although there remain only finitely many cases to check, 
we will try to make the number as small as possible in order to save a computation time. 
%%%%%%%%%%%%%%%%%%%%%%%%%
\begin{lem}\label{lem:nu}
Suppose $V_{2l}=W_{2m}$ holds for some integers $l$ and $m$ with $m \ge 2$. 
If $\nu=l-m$, then $\nu \ge 11$. 
\end{lem}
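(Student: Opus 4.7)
The plan is to exclude every small value $\nu \in \{0, 1, \dots, 10\}$ under the hypothesis $m \ge 2$. The trivial case $\nu = 0$ with $m = l = 1$ corresponds precisely to the regular extension $d = d_+$ and is ruled out by $m \ge 2$; what remains is to exclude the other small values.

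My first step is to extract congruences from the two recurrences. A direct induction using $s^2 = ac+4 \equiv 4 \pmod c$ gives $W_{2m} \equiv 2 \pm m\,sa \pmod c$, while the parallel $r^2 = ab+4 \equiv 4 \pmod b$ yields $V_{2l} \equiv 2 + l\, ra \pmod b$. Since also $V_{2l} \equiv 2 \pmod a$, writing $V_{2l} - 2 = a Y$ for an integer $Y$ and combining the two congruences produces
\[
lr \equiv \pm m s \pmod{D}, \qquad D := \gcd\bigl(b/\gcd(a,b),\, c/\gcd(a,c)\bigr).
\]
Eliminating $s = r + K$ turns this, according to the sign of $W_1 = s \pm a$, into either $\nu r \equiv mK \pmod D$ or $(2m+\nu)r \equiv -mK \pmod D$. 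For most parameter ranges $D$ is large compared with $r$, $K$, $\nu$, $m$, so this linear divisibility is highly restrictive for each fixed small $\nu$.

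Second, I would trap $m$ using the logarithmic form. Rewriting $\Lambda = 2\nu \log \beta + \log \chi - 2m \log(\alpha/\beta)$ and coupling the bound $0 < \Lambda < \alpha^{1-4m}$ from (\ref{in:L}) with Lemmas~\ref{le4}, \ref{le4p2} (and \ref{le7}, \ref{le7p2} for $\chi$), one traps $m$ in an interval of length well below $1$ centred on $(\nu\log\beta + \tfrac12\log\chi)/\log(\alpha/\beta)$, so that $m$ is a determined function of $\nu$, $A$, $K$. Meanwhile Lemma~\ref{le9} (respectively~\ref{le9p2}) supplies the matching lower bound $m > (A-1)\nu\log\beta$ (respectively $m > A\nu\log\beta$). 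For each small $\nu$ the parameters $\rho,\mu$ in Laurent's theorem~\cite{lau} can be re-tuned as in Proposition~\ref{prop:2log} to sharpen the upper bound on $m$; the threshold $\nu = 11$ emerges as the value at which the sharpened Baker-type upper bound on $A$ first ceases to contradict $(A-1)\nu\log\beta < m$.

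Finally, within the small residual range of $(A, K)$ furnished by Propositions~\ref{prop:2log} and~\ref{prop:hg2}, the narrow $m$-window leaves only finitely many candidate tuples $(A, K, m, \nu)$ with $\nu \le 10$, each of which is eliminated by checking the congruence $\nu r \equiv mK \pmod D$ (or its $-$-sign analogue) against the explicit $D$ via a short gp script. The main obstacle will be the $\nu$-by-$\nu$ re-optimisation of Laurent's parameters: the constants $\rho$, $\mu$, $q_2'$, $h$ must be chosen individually for each $\nu \in \{1,\dots,10\}$ so that the resulting numerical inequality forces $A$ below the threshold at which the congruence modulo $D$ finishes the argument.
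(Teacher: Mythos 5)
Your central observation---that once $\nu$, $A$, $K$ are fixed, the inequality (\ref{in:L}) pins $m$ down to the single value $m=\left\lfloor\bigl(\nu\log\beta+\tfrac12\log\chi\bigr)/\log(\alpha/\beta)\right\rfloor$, so that only finitely many tuples $(A,K,\nu)$ with $\nu\le 10$ survive the bounds of Propositions~\ref{prop:hg2} and~\ref{prop:2log}---is exactly the paper's starting point. The gap lies in how you eliminate each surviving tuple. You propose to finish with the congruence $\nu r\equiv mK\pmod{D}$, where $D=\gcd\bigl(b/\gcd(a,b),\,c/\gcd(a,c)\bigr)$, asserting that $D$ is ``large compared with $r$, $K$, $\nu$, $m$''. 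For these triples it is not: since $c=a+b+2r$ and $r^2\equiv 4\pmod{b}$, one has $\gcd(b,c)=\gcd(b,a+2r)$, and $(a+2r)(2r-a)\equiv 16-a^2\pmod{b}$, so $D\le\gcd(b,c)\le K^2-16$; in practice $D$ is very often $1$, $2$ or $4$ (for instance $D\mid 7$ when $K=3$), while $m$ can be as large as $3.4\cdot 10^{16}$. A congruence modulo such a small $D$ will be satisfied by accident by the determined value of $m$ for a sizeable proportion of the tuples, and for those tuples your scheme yields no contradiction, so the proof does not close. The paper instead eliminates every tuple by computing the linear form itself: for the determined $m$ it checks numerically that $\Lambda>\alpha^{1-4m}$, which contradicts (\ref{in:L}) directly; this criterion works uniformly and is what the roughly $30$ hours of computation verified.

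A secondary problem is your claim that the threshold $\nu=11$ ``emerges'' from re-optimising Laurent's parameters $\nu$-by-$\nu$. The two-logarithm bounds of Section~\ref{sec:baker2} give upper bounds on $A$ whose dependence on $\nu$ enters only through terms such as $0.058/\nu$ and $8.0225/\nu$; no choice of $\rho$, $\mu$ there can exclude a small $\nu$ for small $A$ and $K$. In the actual argument the value $11$ is not produced analytically at all: it is simply the largest $\nu$ for which the exhaustive verification over the bounded $(A,K)$-range was carried out, and it is only afterwards (in Proposition~\ref{prop:hg2'}) that $\nu\ge 11$ is fed back into the analytic bounds. Your congruences $W_{2m}\equiv 2\pm msa\pmod{c}$ and $V_{2l}\equiv 2+lra\pmod{b}$ are correct as stated, but they can serve at best as a supplementary filter, not as the decisive elimination step.
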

%%%%%%%%%%%%%%%%%%%%%%%%%
\begin{proof}

Remark that the integer $m$ is completely determined for fixed $A$, 
$K$ and $\nu$. In fact, $m$ is expressed as
\[
m=\frac{\nu \log \beta+0.5 \log \chi}{\log(\alpha/\beta)}
 - \frac{\Lambda}{2\log(\alpha/\beta)},
\]
where the term after the minus sign is positive and less than $1$ in 
view of (\ref{in:L}). Thus we have
\[
m=\left\lfloor \frac{\nu \log \beta + 0.5 \log \chi}{\log(\alpha/\beta)} \right\rfloor.
\]
For each set of values of $A$, $K$ bounded as in Propositions 
\ref{prop:hg2} and \ref{prop:2log}, and for each  $\nu$ with 
$1 \le \nu \le 10$ we computed the linear form  $\Lambda$ 
and found that $\Lambda > \alpha^{1-4m}$, which contradicts (\ref{in:L}). 
Our computer needed about $30$ hours to perform these computations. 
\end{proof}
%%%%%%%%%%%%%%%%%%%%%%%%
\begin{prop}\label{prop:hg2'}
Keep the hypotheses of Proposition $\ref{prop:hg2}$. Then: 

{\rm (1)} $K < 240.24\,(A+1)+740$. 
Moreover, if $40 \le A \le 2810$, then $K < 237.05\,(A+1)$.\par
{\rm (2)} $A \le 2796$ if $\varepsilon=-2$ and $A \le 2810$ if $\varepsilon=2$. 
\end{prop}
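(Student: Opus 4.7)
The plan is to use Lemma~\ref{lem:nu} --- which upgrades the trivial lower bound $\nu \ge 1$ to $\nu \ge 11$ --- to sharpen both the hypergeometric inequality behind Proposition~\ref{prop:hg2} and the Baker--Laurent inequality behind Proposition~\ref{prop:2log}. Since Lemma~\ref{lem:nu} was itself derived from these two propositions, this is a single iterative refinement: the improved $\nu$-bound is fed back in, the prior bounds tighten, and the process stops once the cleanly stated refinements have been secured for both $A$ and $K$.

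For part~(1) I would rerun the derivation of inequality~(\ref{in:KA}) in the proof of Proposition~\ref{prop:hg2} verbatim, but invoke Lemma~\ref{lem:nu} to replace the factor $\nu \ge 1$ on the left-hand side by $\nu \ge 11$. Since the right-hand side of~(\ref{in:KA}) is a decreasing function of $K$, the stronger left-hand side forces a considerably smaller upper bound on $K$. Re-executing the same \texttt{gp} script as in the proof of Proposition~\ref{prop:hg2}, now over the range $A \le 3365$ provided by Proposition~\ref{prop:2log}, should produce the uniform bound $K < 240.24(A+1)+740$. A second, more focused pass restricted to the window $40 \le A \le 2810$ --- where the auxiliary estimates of Lemmas~\ref{le2}--\ref{le7p2} are sharper --- should improve the leading constant to $237.05$.

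For part~(2) I would repeat the Baker--Laurent analysis of Proposition~\ref{prop:2log} with the same choices of $a_1$, $a_2$, $\rho$, $\mu$, but now with $\nu \ge 11$ substituted into Lemmas~\ref{le9} and~\ref{le9p2}, so that $m > 11(A-1)\log\beta$ in the $\varepsilon=-2$ case and $m > 11A\log\beta$ in the $\varepsilon=2$ case. The case split on $h \le 28.9$ versus $h > 28.9$ proceeds exactly as before, and plugging $\nu = 11$ into the resulting bounds (\ref{A<:-2})--(\ref{A<:2'}) yields $A \le 2796$ when $\varepsilon = -2$ and $A \le 2810$ when $\varepsilon = 2$. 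The sharpened $K$-bound from part~(1) can be fed back, if needed, to tighten the estimates on $\log\alpha$ and $\log\beta$ that enter the choices of $a_1$ and $a_2$.

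The main obstacle is computational bookkeeping rather than any conceptual hurdle. In part~(1) the enumeration must run with enough resolution in $K$ over the full range $A \in [2, 3365]$ to rule out every near-threshold pair, and in part~(2) one must verify that the constants $q_2'$, $11.913$, $4.24675$ remain valid under the substitution $\nu \mapsto 11$ without forcing a reoptimization of the Laurent parameters $\rho$ and $\mu$.
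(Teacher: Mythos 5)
Your overall strategy -- feed the improved bound $\nu\ge 11$ from Lemma~\ref{lem:nu} back into inequality (\ref{in:KA}) and into the Laurent-type bounds (\ref{A<:-2})--(\ref{A<:2'}) -- is indeed how the paper begins, and it does deliver the first assertion of (1) and the $\varepsilon=-2$ case of (2). But two of your steps do not go through as stated. First, in (2) with $\varepsilon=2$, substituting $\nu=11$ into (\ref{A<:2}) and (\ref{A<:2'}) gives $A<\bigl(40+8.0225/11\bigr)\exp(4.24675)\approx 2846$ and $A<70.073\bigl(40+8.0225/11\bigr)\approx 2854$, so you only obtain roughly $A\le 2853$, not $A\le 2810$; the extra term $8.0225/\nu$ is too large at $\nu=11$. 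The paper closes this gap by a further reduction computation showing that $\nu\ge 70$ whenever $A>2810$ and $\varepsilon=2$, and only then do (\ref{A<:2}), (\ref{A<:2'}) force $A\le 2810$. Your proposal contains no analogue of this step.

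Second, the refinement $K<237.05\,(A+1)$ for $40\le A\le 2810$ is not produced by "a more focused pass" of the same script with sharper versions of Lemmas~\ref{le2}--\ref{le7p2}; those lemmas are not where the gain comes from. The paper's mechanism is different: (i) additional computations in the spirit of Lemma~\ref{lem:nu} upgrade the $\nu$-bound range by range ($\nu\ge 12$ for $A\le 900$, $\nu\ge 14$ for $A\le 360$, $\nu\ge 25$ for $A\le 40$), and (ii) for each $A$ in the three ranges one substitutes $K=237.05\,(A+1)$ directly into the quantities $p$, $P$, $l$, $L$ defined after (\ref{in:pPlL}) and recomputes $C^{-1}$ and $\lambda$ via Lemma~\ref{lem:RB}, rather than using the generic constants of Theorem~\ref{thm:RB}; the resulting values are incompatible with (\ref{in:KA}), which rules out $K\ge 237.05\,(A+1)$ in that window. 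With only $\nu\ge 11$ and the generic constants there is no reason the constant $237.05$ would emerge, so your plan for this part needs these two additional ingredients to be a proof.
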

%%%%%%%%%%%%%%%%%%%%%%%%
\begin{proof}
(1) Inequality (\ref{in:KA}) with $A \ge 2$ and $\nu \ge 11$ shows 
the first assertion. In a way similar to Lemma \ref{lem:nu}, one can 
check by computer that if $A \le A_0$, then $\nu \ge \nu_0$, where 
\[
(A_0,\nu_0) \in \{(900,12),~(360,14),~(40,25)\},
\]
and show the following: \\[6pt]
%%%
Substituting $K=237.05\,(A+1)$ and each value of $A$ in each of 
the ranges $40 \le A <360$, $360 \le A < 900$, $900 \le A \le 2810$ 
into the quantities $p$,~$P$,~$l$,~$L$ defined in Section~\ref{R} 
immediately after (\ref{in:pPlL}),   
Lemma~\ref{lem:RB} yields renewed $C^{-1}$ and $\lambda$, 
that are not compatible with inequality (\ref{in:KA}).\\[6pt]
%%%
Thus one obtains the revised bound $K < 237.05\,(A+1)$ for 
$40 \le A \le 2810$. \par
(2) Inequalities (\ref{A<:-2}), (\ref{A<:-2'}) together with 
$\nu \ge 11$ give the asserted inequality for $\varepsilon=-2$. 
When $\varepsilon=2$ one verifies that $\nu \ge 70$ for $A>2810$ 
(this takes only a few hours of computer time), which together 
with inequalities (\ref{A<:2}), (\ref{A<:2'}) implies the result. 
\end{proof}
%%%%%%%%%%%%%%%%%%%%%%%%

%%%%%%%%%%%
In order to get an absolute upper bound for $m$, we appeal to 
Matveev's theorem for three logarithms. 

%%%%%
\begin{thm}[\cite{Matveev}]\label{thm:Mat}
Let $\lambda_1$, $\lambda_2$, $\lambda_3$ be $\bQ$--linearly 
independent logarithms of non-zero algebraic numbers and let 
$b_1$, $b_2$, $b_3$ be rational integers with $b_1\not=0$.
Define $\alpha_j=\exp(\lambda_j)$ for $j=1,~ 2,~ 3$ and

\[
    \Lambda = b_1 \lambda_1 + b_2 \lambda_2 + b_3 \lambda_3 .
\]
Let $D$ be
the degree of the number field $\bQ(\alpha_1,\alpha_2,\alpha_3)$ over $\bQ$. Put
\[
    \chi = [\bR(\alpha_1,\alpha_2,\alpha_3) : \bR].
\]
Let $ A_1$, $A_2$, $A_3$
be positive real numbers, which satisfy
\[
    A_j \ge \max\bigl\{D \hh(\alpha_j),  |\lambda_j|, 0{.}16 \bigr\}
    \quad (1\le j\le 3) .
\]
Assume that
\[
    B \ge \max\Bigl\{1,
    \max \bigl\{ |b_j|A_j/A_1; \, 1\le j \le 3 \bigr\}  \Bigr\}.
\]
Define also
\[
    C_1 = \frac{5\times 16^5}{6\chi}\,e^3\,(7+2\chi)\left(\frac{3e}{2}\right)^\chi\!
    \Bigl( 20{.}2+\log\bigl(3^{5{.}5} D^2\log (e D )\bigr)\Bigr).
\]
Then
\[
    \log |\Lambda| > - C_1\,  D^2\,  A_1\, A_2 A_3\,
    \log\, \bigl( 1{.}5\,e D B \log(e D )  \bigr) .
\]
\end{thm}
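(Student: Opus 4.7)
The plan is to prove Theorem~\ref{thm:Mat} by the Baker--Gelfond transcendence method in the form sharpened by Matveev: argue by contradiction, and from the assumption $|\Lambda| < \exp(-C_1 D^2 A_1 A_2 A_3 \log(1.5eDB\log(eD)))$ construct an auxiliary entire function that vanishes to prohibited order, ultimately forcing a nontrivial multiplicative relation among $\alpha_1,\alpha_2,\alpha_3$. Since the $\lambda_j$ are $\bQ$-linearly independent, no such relation exists, so we obtain a contradiction. Writing $\beta_j = b_j/b_1$ (permissible since $b_1 \ne 0$), smallness of $\Lambda$ reads as $\alpha_1 \approx \alpha_2^{-\beta_2}\alpha_3^{-\beta_3}$ with extreme accuracy, and this is the engine driving every analytic bound below.

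First I would set up the auxiliary function. Fix integer parameters $L_0, L_1, L_2, L_3, T, S$ calibrated so that $L_j \asymp D A_j$ for $j \ge 1$, $L_0 \asymp \log(eDB\log(eD))$, and $TS$ roughly matches the total number of coefficients divided by $L_0$, the precise values being tuned (as in \S2 of Matveev's paper) to balance Siegel's lemma against the analytic bound to come. Consider
\[
\Phi(z) = \sum_{\lambda_0=0}^{L_0}\sum_{\lambda_1=0}^{L_1}\sum_{\lambda_2=0}^{L_2}\sum_{\lambda_3=0}^{L_3} p(\lambda)\, z^{\lambda_0}\,\alpha_1^{\lambda_1 z}\alpha_2^{\lambda_2 z}\alpha_3^{\lambda_3 z}
\]
and demand $\Phi^{(\tau)}(s) = 0$ for all $0 \le \tau < T$ and all integers $s$ with $|s| \le S$. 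Each such condition is a linear equation over $\bQ(\alpha_1,\alpha_2,\alpha_3)$ in the $p(\lambda)$; applying Siegel's lemma over this degree-$D$ number field, using $A_j \ge D\hh(\alpha_j)$ to control heights, yields non-zero rational integer coefficients $p(\lambda)$ of controlled size, provided $(L_0+1)(L_1+1)(L_2+1)(L_3+1)$ exceeds $D T (2S+1)$ by a definite factor.

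Second, I would bound $|\Phi^{(\tau)}(s)|$ analytically for integers $s$ in an enlarged range $|s|\le S'\gg S$ and extrapolate. Using $\alpha_1^{\lambda_1 s} = \alpha_2^{-\beta_2\lambda_1 s}\alpha_3^{-\beta_3\lambda_1 s}\exp(\lambda_1 s\Lambda/b_1)$, rewrite $\Phi$ as a function built almost entirely from $\alpha_2^z$ and $\alpha_3^z$, with a multiplicative error of size $\exp(O(SL_1|\Lambda|))$ that is negligible under our hypothesis on $|\Lambda|$. The Schwarz lemma on a disk of radius $R \asymp S'$ then gives $|\Phi^{(\tau)}(s)|$ smaller than the reciprocal of the common denominator of its values in $\bQ(\alpha_1,\alpha_2,\alpha_3)$, hence $\Phi^{(\tau)}(s)=0$ at every such point. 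Iterating (each enlargement of $S$ improves the Schwarz bound, which produces still more zeros) eventually forces vanishing of the polynomial $P(X_0, X_1, X_2, X_3) = \sum p(\lambda)X_0^{\lambda_0}X_1^{\lambda_1}X_2^{\lambda_2}X_3^{\lambda_3}$ on a Zariski-dense subset of the algebraic subgroup of $\mathbb{G}_a \times \mathbb{G}_m^3$ generated by $(s, \alpha_1^s,\alpha_2^s,\alpha_3^s)$.

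The main obstacle is closing the loop with a sharp zero estimate: a Philippon- or W\"ustholz-type multiplicity lemma must be applied to $P$ on this subgroup to conclude either $P\equiv 0$ (ruled out by the Siegel construction) or a nontrivial multiplicative dependence among $\alpha_1,\alpha_2,\alpha_3$ (ruled out by $\bQ$-linear independence of the $\lambda_j$). Both the zero-estimate constants and the Siegel/Schwarz constants must be tracked explicitly to recover the precise numerical shape of $C_1$: the factor $(3e/2)^\chi$ records the asymmetry between real and complex embeddings (one uses a real contour when $\chi=1$ and a complex one when $\chi=2$), the factor $5\cdot 16^5/(6\chi)$ is the aggregate of combinatorial losses through the extrapolation, and the $20.2+\log(3^{5.5}D^2\log(eD))$ summand captures the dependence on $B$ through $L_0$. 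This numerical optimization --- absent from textbook accounts and carried out only by Matveev himself in this sharpness --- is what I expect to be the decisive difficulty in reproducing the statement exactly as quoted.
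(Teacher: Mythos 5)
The first thing to say is that the paper does not prove this statement at all: Theorem~\ref{thm:Mat} is quoted verbatim from Matveev's paper \cite{Matveev} and used as a black box, so there is no internal proof to compare your attempt against. The appropriate ``proof'' in the context of this paper is the citation itself; reproducing the theorem from scratch means reproducing a substantial part of a long research paper.

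Measured against that standard, your proposal is a road map rather than a proof, and the gap is precisely the part that makes the theorem a theorem. Every quantitatively decisive step is deferred: the Siegel-lemma construction is described only up to unspecified parameters ``tuned as in \S2 of Matveev's paper''; the arithmetic lower bound (the Liouville-type inequality that upgrades ``small'' to ``zero'' at the extrapolation step) is invoked but not carried out with the height bounds $A_j \ge D\hh(\alpha_j)$; the zero estimate is named (``a Philippon- or W\"ustholz-type multiplicity lemma'') but neither stated nor applied with explicit constants; and the final parameter optimization that produces $C_1 = \frac{5\cdot 16^5}{6\chi}e^3(7+2\chi)(3e/2)^{\chi}(20.2+\log(3^{5.5}D^2\log(eD)))$ is explicitly acknowledged as ``the decisive difficulty'' and not attempted. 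As written, the argument would equally well ``establish'' the same statement with any other constant in place of $C_1$, which means it establishes nothing. Two further substantive points are glossed over: Matveev's improvement over the generic Baker-method template comes from a specific auxiliary construction and a careful treatment of the degenerate case, not from the textbook scheme you outline; and the endgame is subtler than ``a multiplicative relation contradicts $\bQ$-linear independence of the $\lambda_j$'' --- the zero estimate produces a relation with bounded exponents, and converting that into either a contradiction or a descent to fewer logarithms is itself a nontrivial part of the original proof. If your goal is to use this theorem as the paper does, cite it; if your goal is to prove it, essentially all of the work remains to be done.
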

%%%%%

In our case we choose 
\[
\alpha_1=\chi,\ b_1=1, \quad \alpha_2=\beta, \ b_2=2\nu, \quad 
\alpha_3=\alpha/\beta, \ b_3=-2m.
\]
Then we can take, for $j=1$, $2$, $3$,
\[
A_j = 4 \hh(\alpha_j)
\]
and
\[
B = 2m A_3/A_1.
\]
With these values we get
\[
m < 3.4 \cdot 10^{16}.
\]
%%%%%%%%%%%%%

It now remains only to perform the reduction procedure. 
Let 
\[
\alpha=\frac{s+\sqrt{ac}}{2},\quad \gamma=\frac{t+\sqrt{bc}}{2},
\quad \mu=\frac{\sqrt{b}(\sqrt{c} \pm \sqrt{a})}{\sqrt{a}(\sqrt{c}\pm \sqrt{b})},
\]
where the signs coincide. 
If $z=v_{2m}=w_{2n}$ has a solution with $mn \neq 0$, 
then the linear form $\Omega=2m \log \alpha -2n \log \gamma +\log \mu$ 
satisfies
\begin{align*}
0<\Omega < 2ac \,\alpha^{-4m}
\end{align*}
(cf.~\cite[Section 4]{fht}). 
The following is a version of the Baker-Davenport lemma 
(\cite[Lemma]{Baker-Davenport:1969}), due to Dujella and Peth\H{o}, 
needed here. 
%%%
\begin{lem}{\rm (\cite[Lemma 5 a\small{)}]{dup})}\label{lem:red}\quad 
Let $M$ be a positive integer, and $\kappa$, $\xi$ real numbers. 
Let $P/Q$ be a convergent of the continued fraction expansion of 
$\kappa$ such that $Q>6M$. Put $\eta=||\xi Q||-M\,||\kappa Q||$, 
where $||\cdot||$ denotes the distance from the nearest integer. 
If $\eta>0$, then there exists no solution of the inequality
\[
0 < m \kappa -n + \xi < EB^{-m}
\]
in integers $m$ and $n$ with
%%%%%
\[
\frac{\log(EQ/\eta)}{\log E}\leq m < M.
\]
%%%%%
\end{lem}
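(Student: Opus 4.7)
The plan is to prove Lemma~\ref{lem:red} by contradiction. Suppose there exist integers $m$, $n$ with $\log(EQ/\eta)/\log B \le m < M$ satisfying $0 < m\kappa - n + \xi < EB^{-m}$; I will derive an upper bound on $m$ strictly smaller than $\log(EQ/\eta)/\log B$, yielding the required contradiction. (I am reading the denominator in the displayed range as $\log B$, which is what the argument naturally produces.)

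First I would multiply the given chain of inequalities through by the positive integer $Q$:
\begin{equation*}
0 < m(Q\kappa) - nQ + \xi Q < EQ\,B^{-m}.
\end{equation*}
Then exploit that $P/Q$ is a convergent of $\kappa$: by the best-approximation property, $P$ is the nearest integer to $Q\kappa$, so $|Q\kappa - P| = \|Q\kappa\|$. Substituting $Q\kappa = P + (Q\kappa - P)$ and absorbing the integer $mP - nQ$, the quantity $\xi Q + m(Q\kappa - P)$ lies within distance $EQ\,B^{-m}$ of the integer $nQ - mP$, hence
\begin{equation*}
\bigl\| \xi Q + m(Q\kappa - P) \bigr\| < EQ\,B^{-m}.
\end{equation*}

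The crux is then the reverse triangle inequality for the nearest-integer norm:
\begin{equation*}
\|\xi Q\| \;\le\; \bigl\|\xi Q + m(Q\kappa - P)\bigr\| + |m|\,|Q\kappa - P| \;<\; EQ\,B^{-m} + M\,\|Q\kappa\|,
\end{equation*}
where I have used $|m| < M$ and the convergent identity $|Q\kappa - P| = \|Q\kappa\|$. Rearranging gives $\eta = \|\xi Q\| - M\|Q\kappa\| < EQ\,B^{-m}$, i.e.\ $B^m < EQ/\eta$, so $m < \log(EQ/\eta)/\log B$, contradicting the hypothesis on $m$.

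There is essentially no serious obstacle: the only non-formal input is the best-approximation identity for convergents, and the argument is pure triangle-inequality bookkeeping. The hypothesis $Q > 6M$ plays the quiet role of ensuring that $M\|Q\kappa\| < M/Q < 1/6$ is substantially smaller than the a priori bound $\|\xi Q\| \le 1/2$, so that the defining quantity $\eta = \|\xi Q\| - M\|Q\kappa\|$ is not forced to be non-positive and the whole statement is non-vacuous; it also makes the best-approximation property comfortably applicable for any $P$ satisfying $|Q\kappa - P| \le 1/(2Q)$, which is automatic for convergents.
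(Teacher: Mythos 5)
Your proof is correct: it is exactly the standard argument behind the Dujella--Peth\H{o} reduction lemma (multiply by $Q$, replace $Q\kappa$ by its nearest integer $P$ using the convergent property $|Q\kappa-P|=\|Q\kappa\|$, and apply the triangle inequality for the nearest-integer distance to get $\eta< EQB^{-m}$). The paper itself gives no proof --- the lemma is quoted from the cited reference --- and your argument reproduces the proof given there; you are also right that the denominator in the displayed range should be $\log B$ rather than $\log E$ (a typo in the statement as reproduced here), and that the hypothesis $Q>6M$ serves only to keep $\eta=\|\xi Q\|-M\|\kappa Q\|$ from being forced non-positive, not as an ingredient of the contradiction.
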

%%%
We apply Lemma \ref{lem:red} with 
\[
\kappa=\frac{\log \alpha}{\log \gamma},\quad \xi=\frac{\log \mu}{2\log \gamma},
\quad E=\frac{ac}{\log \gamma},\quad B=\alpha^4 
\]
and $M=3.4\cdot 10^{16}$ for $A$, $K$ satisfying
\begin{align*}
\begin{cases}
K<237.05\,(A+1) & \text{if $40 \le A \le 2810$},\\
K<240.24\,(A+1)+740 & \text{if $2 \le A \le 39$}.
\end{cases}
\end{align*}
The computation was carried out by running a program developed in 
PARI/GP (\cite{gp}) with the precision
\[
{\rm realprecision} = \max\{180, 10 \lceil A/100 \rceil\},
\]
and no counter-example was found. 
The verification took around three months. 
This completes the proof of Theorem \ref{thm:main}. 

%%%%%%%%%%%%%%%%%%%%%%%%%
\begin{proof}[Proof of Corollary $\ref{cor:main}$]
Suppose that $r \equiv \varepsilon \pmod{a}$, and put $r=ka+\varepsilon$ 
with an integer $k$. Then, $b=k^2a+2\varepsilon k$ and 
$c=(k+1)^2a+2\varepsilon(k+1)$. Applying Theorem \ref{thm:main} 
to the triple $\{a,b,c\}$ with $K=a$ and $A=k$, one can obtain 
the first assertion. The second assertion is an immediate consequence 
of the first one together with the fact that  one always has
$r^2 \equiv \varepsilon^2 \pmod{a}$. 
\end{proof}
%%%%%%%%%%%%%%%%%%%%%%%%%]

\section{An application to the study of $D(1)$-quintuples} \label{se7}

In this section we consider a hypothetical  $D(1)$-quintuple
 $\{a,b,c,d,e\}$ with $a<b<c<d<e$,  $c=a+b+2r$,  and $r=a^2-\Delta$.
Theorem~\ref{thm:main} ensures $\Delta >1$. An upper bound of the 
type $\Delta < a^2-a$ is derived from the obvious inequality $a < r$.
Our considerations are based on a recent result, recalled here for
reader's convenience.
 
\bel{lem:t13} (\cite[Theorem~1.3]{CFF})
Let  $\{a,b,c,d,e\}$ be a  quintuple  with $a<b<c<d<e$  and  
$c=a+b+2\sqrt{ab+1}$. Then $b < a^3$ and $\gcd(b,c)=1$.
In particular, at least one of $a$, $b$ is odd.
\eel

In conjunction with Lemma~3.4 from~\cite{CFF}, which essentially
says that for each $D(1)$-quintuple one has $b > 4000$, 
Lemma~\ref{lem:t13} gives the lower bound $a \ge 16$. From 
$ab+1=r^2$ one obtains $b=a^3-2a\Delta +(\Delta ^2-1)/a$,
whence the conclusion that the integer $a$ is a divisor greater
than 15 for $\Delta ^2-1$. This in turn implies $\Delta \ge 5$.

When $\Delta = 5$, the only admissible divisor of 24 is $a=24$,
so that $b=13585=5\cdot 11\cdot 13\cdot 19$ and $c=14751=11\cdot 1341$.
Then $\gcd (b,c)=11$, in contradiction with Lemma~\ref{lem:t13}.

Up to now we have proved that $\Delta \ge 6$, an information with
striking consequences.

\bepr{pr:app} In the hypothesis of  Lemma~\ref{lem:t13} one has
$b<a^3-11a$ and $a\ge 20$.
\eepr \bep 
The first assertion follows from 
$a^3-2a\Delta +(\Delta ^2-1)/a < a^3-11a$,
which is equivalent to $\Delta^2 -2a^2\Delta +11a^2 \le 0$ and
to $a^2-\sqrt{a^4-11a^2} \le \Delta  \le a^2+\sqrt{a^4-11a^2}$.
The right inequality is much weaker than  $\Delta < a^2-a$, while 
the left one is easily derived by interlacing 6 between its terms.

The second assertion in the conclusion follows from 
Corollary \ref{cor:main}, since $17$ and $19$ are  prime numbers,
while $18$ is twice a power of a prime.
\eep

If so needed/wanted, one can pursue the analysis and eliminate
other values of $\Delta$. For instance, when $\Delta =6$,
$\Delta  ^2-1$ has unique divisor greater than 16, namely $a=35$.
To conclude that $\Delta > 6$ one has to prove that the $D(1)$-triple
$(35,42456,44929)$ has unique extension to a $D(1)$-quadruple.
When  $\Delta =7$, the only admissible candidates for the smallest
entry are $a=24$ and $a=48$. The former value entails $b=13490$, so
that $\gcd (b,c)=2$, which means that in this case one can not obtain
a $D(1)$-quintuple, so it remains to study the extendability of
the triple $(48,109921,114563)$. In order to prove that one has
$\Delta > 10$, three more triples, viz., $(a,b,c)=(21,8928,9815)$, 
$(80,510561,523423)$, $(99,968320,988001)$, need to be shown to have
unique extension to a $D(1)$-quadruple. 

Each lower bound $\Delta \ge \Delta _0$ can be used to improve
upon Proposition~\ref{pr:app}.

%%%%%%%%%%%%%%%%%%%%%%%%%%%%%%%%%%%%%%%%%%%%%
\medskip

Another kind of upper bounds for $b$ can be obtained from $\Delta>1$ 
and $\Delta^2 \equiv 1 \pmod{a}$. In other words, it holds
\[
\Delta \ge \sqrt{a+1}\quad \text{and}\quad b \le a^3-2a\,\sqrt{a+1}+1. 
\]
The extremal case $\Delta=\sqrt{a+1}$ frequently appears in the
observation above, which motivated us to show the following.

%%%
\begin{prop}\label{prop:delta}
Let $\Delta \ge 6$ be an integer and
\[
a=\Delta^2-1, \quad  b=\Delta^6-3\Delta^4-2\Delta^3+3\Delta^2+2\Delta, 
\quad c=\Delta^6-\Delta^4-2\Delta^3+1. 
\]
If $\{a,b,c,d\}$ is a $D(1)$-quadruple,  
then
\begin{align*}
d=d_+&=4\Delta^{14}-20\Delta^{12}-16\Delta^{11}+40\Delta^{10}+56\Delta^9-16\Delta^8\\
&\quad-72\Delta^7-32\Delta^6+24\Delta^5+32\Delta^4+8\Delta^3-4\Delta^2-4\Delta.
\end{align*}
\end{prop}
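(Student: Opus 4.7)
My plan is to follow the template of the proof of Theorem~\ref{thm:main}. First, I verify the algebraic identities
\[
 b = a(a^{2}-2\Delta)+1, \qquad c = (a+1)\bigl(a(a+1)-2\Delta\bigr)+1,
\]
which, together with $a+1 = \Delta^{2}$, imply $ab+1 = r^{2}$, $ac+1 = s^{2}$, $bc+1 = t^{2}$ for $r = a^{2}-\Delta$, $s = \Delta^{4}-\Delta^{2}-\Delta$, $t = r+b$, and hence $c = a+b+2r$. So $\{a,b,c\}$ is a regular $D(1)$-triple, and the element $d_{+}$ in the proposition coincides with $a+b+c+2(abc+rst)$.

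Next, assuming $d$ extends $\{a,b,c\}$ to a $D(1)$-quadruple and writing $ad+1 = x^{2}$, $bd+1 = y^{2}$, $cd+1 = z^{2}$, I eliminate $d$ to obtain the Pell equations $cx^{2}-az^{2} = c-a$ and $cy^{2}-bz^{2} = c-b$. Their positive solutions are parametrized by $z = v_{m} = w_{n}$ with recurrences whose characteristic roots are $\alpha = (s+\sqrt{ac})/2$ and $\beta = (r+\sqrt{ab})/2$. Applying the $D(1)$-analogue of Lemma~\ref{lefil1} and using the polynomial growth of $a,b,c,r,s,t$ in $\Delta$, I would discard all options for the fundamental solutions except $|z_{0}| = |z_{1}| = 1$ when $\Delta \ge 6$. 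The regular extension corresponds to $m = n = 0$; otherwise $m \ge 1$ and one gets the non-trivial linear form
\[
 \Lambda = 2\nu\log\beta - 2m\log(\alpha/\beta) + \log\chi, \qquad \chi = \frac{\sqrt{bc}+\sqrt{ac}}{\sqrt{bc}\pm\sqrt{ab}},
\]
satisfying $0 < \Lambda < \alpha^{1-4m}$. Matveev's theorem on three logarithms (or Laurent's sharper bound for two, as in Section~\ref{sec:baker2}) then yields an absolute bound $m < M_{0}$ that is only polynomial in $\log\Delta$.

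The finishing step uses a Rickert-type theorem adapted to this triple: I would construct rational approximations $p_{1}/q$, $p_{2}/q$ to auxiliary $\theta_{1},\theta_{2}$ built from the Pell solutions, exploiting the factorization $a = (\Delta-1)(\Delta+1)$ and the congruences $r \equiv -1 \pmod{\Delta-1}$, $r \equiv 1 \pmod{\Delta+1}$ (which take the place of the divisibilities $A\mid N$ and $(A+1)\mid N\pm 2A, N\pm 2$ in Section~\ref{R}) to control the denominators of the hypergeometric polynomials. This should yield an absolute upper bound on $\Delta$, leaving finitely many pairs $(\Delta,m)$ that a PARI/GP Baker--Davenport reduction disposes of. The main obstacle is precisely this last step: because $\{a,b,c\}$ does \emph{not} lie in the family of Theorem~\ref{thm:main} (one has $r \equiv -\Delta \pmod{a}$, so Corollary~\ref{cor:main} is \emph{inapplicable}), the Rickert estimate has to be rebuilt from scratch for this triple, and verifying numerically that the resulting exponent $\lambda < 2$ under the hypothesis $\Delta \ge 6$ is the delicate part.
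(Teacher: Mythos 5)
Your preparatory steps are sound and match the paper: the identities for $r,s,t$, the regularity $c=a+b+2r$, the reduction to a system of Pellian equations and to a linear form $\Lambda$ with $0<\Lambda<\alpha^{1-4m}$, and the correct observation that Corollary~\ref{cor:main} is inapplicable because $r\equiv-\Delta\pmod{a}$ (minor slips only: in the $D(1)$ setting the paper takes $\alpha=s+\sqrt{ac}$, $\beta=r+\sqrt{ab}$ without the halves, and the recurrences are $W_{m+2}=2sW_{m+1}-W_m$, $V_{l+2}=2rV_{l+1}-V_l$). The genuine gap is the step that bounds $\Delta$. You delegate it to a Rickert-type theorem ``rebuilt from scratch'', and that is precisely the step that would fail: a hypergeometric estimate with exponent $\lambda<2$ of the shape of Theorem~\ref{thm:RB} needs the analogue of $K$ to be large compared with the analogue of $A$ (the theorem requires $K\ge 30.03|\varepsilon|^3(A+1)$), whereas for this one-parameter family the smallest element $a=\Delta^2-1$ plays the role of $K$ while the role of $A$ is played by roughly $\Delta^2$, so one sits in the regime $K\approx A$ where the method yields no usable irrationality measure. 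Moreover, the Matveev bound you invoke is only ``polynomial in $\log\Delta$'', hence not absolute until $\Delta$ is bounded, so the finiteness needed for the Baker--Davenport reduction is not established by your argument as written.

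The paper bounds $\Delta$ without any hypergeometric input. It proves primed analogues of the lemmas of Sections~\ref{sec2}--\ref{sec2p2}, the decisive ones being the height estimate $\hh(\chi)<2\log\beta$ and the gap principle $m>(\Delta^2-2)\nu\log\beta$ (the analogue of Lemma~\ref{le9}), and then applies Laurent's two-logarithm theorem to $\Lambda=\log(\beta^{2\nu}\chi)-2m\log(\alpha/\beta)$, exactly as in Section~\ref{sec:baker2}. Because the coefficient $\Delta^2-2$ in the gap principle grows quadratically in $\Delta$, Laurent's upper bound on $m/\log\beta$ contradicts the gap principle once $\Delta>60$, giving $\Delta\le 60$; only then is Matveev's three-logarithm theorem used to obtain the absolute bound $m<10^{17}$, after which the Baker--Davenport reduction over the finitely many $\Delta\le 60$ completes the proof. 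To repair your proposal, replace the Rickert step by this Laurent-plus-gap-principle argument (or supply some other effective absolute bound on $\Delta$); without that, the proof is incomplete.
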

%%%
\begin{proof}
We easily verify that
\[
\ r=\Delta^4-2 \Delta^2-\Delta+1, \quad s=\Delta^4-\Delta^2-\Delta, 
\quad t=\Delta^6-2\Delta^4-2\Delta^3+\Delta^2+\Delta+1.
\]
Suppose that $\{a,b,c,d\}$ is a $D(1)$-quadruple with $d>d_+$. 
Putting $ad+1=x^2$, $bd+1=y^2$, $cd+1=z^2$, and eliminating $d$ from 
these equations, we obtain the following system of Pellian equations:
\begin{align}
az^2-cx^2=a-c, \label{D:zx}\\
ay^2-bx^2=a-b. \label{D:yx}
\end{align}
Since $c=a+b+2r$, the same argument as Section 2 in \cite{HT2} 
applies and one finds that any solution to the system of Pellian 
equations (\ref{D:zx}), (\ref{D:yx}) is given by $x=W_{2m}=V_{2l}$, 
where
\begin{align*}
W_0=1,\quad &W_1=s \pm a,\quad W_{m+2}=2sW_{m+1}-W_m,\\
V_0=1,\quad &V_1=r+a,\quad V_{l+2}=2rV_{l+1}-V_l.
\end{align*}
Put
\[
\alpha =s+\sqrt{ac}, \quad \beta =r+\sqrt{ab},
\quad \chi =\frac{\sqrt{bc} + \sqrt{ac}}{\sqrt{bc} \pm \sqrt{ab}}.
\]

\medskip

The following results are the analogs of the preceding ones.

\medskip

\textbf{Lemma 3.5'.} \quad  $\alpha -\beta >2a=2(s-r).$

\medskip

\textbf{Lemma 3.6'.} \quad   $1/ \Delta^2< \sqrt{ {a/c}} < 
\log(\alpha/\beta) < \sqrt{ {a/b}} < {1 /(\Delta^2-2)}$.

\medskip

\textbf{Lemma 3.7'.} \quad  $\beta > 1.9999\, r$.

\medskip

\textbf{Lemma 3.8'.} \quad $  c-a \le b+ (2\Delta +3)b /( \Delta^3-\Delta)$.

\medskip

\textbf{Lemma 3.9'.} \quad  \textit{If} $\Delta\ge \Delta_0\ge 6$ 
 \textit{then}
\[
 bc^2(c-a) < 1.9999^{-8} \left( 1+\frac{ 2\Delta_0 +3}{\Delta_0^3-\Delta_0} \right) 
 \frac{\beta ^8}{(\Delta_0^2-2)^4} .
\]

\medskip

\textbf{Lemma 3.11'.} \quad
$\displaystyle  \frac{\sqrt{bc}+\sqrt{ac}}{\sqrt{bc} - \sqrt{ab}} < 
\frac{\sqrt{bc}+\sqrt{ac}}{\sqrt{bc} + \sqrt{ab}} < 1+\frac{1}{\Delta^2-2}$.

\medskip

\textbf{Lemma 3.12'.} \quad  \textit{Consider the linear form}
\[
\Lambda = \log(\beta^{2\nu}\chi)-2m \log(\alpha/\beta),
\]
 \textit{and put} $\nu = l-m$  \textit{with} $m \ge 1$. 
 \textit{Then} $m > (\Delta^2-2)\nu \log\beta $.

\medskip

We get again
\def\hh{{\rm h}}
\[
\hh(\chi ) \le \frac{1}{4} \left( \log(b^2(c-a)^2) +
 \log \frac{c(\sqrt a +\sqrt b)^2}{b(c-a)}\right)
\le   \frac{1}{4} \log \bigl(b c^2(c-a)  \bigr),
\]
hence
\[
\hh(\chi ) < 2 \log \beta
\]
by Lemma 3.9'. Again
\[
\hh(\alpha /\beta)=\frac{1}{2} \log \alpha.
\]

As the  numbers $\beta^{2\nu} \chi $ and $\alpha/\beta$  are 
multiplicatively independent over $\bQ$, we can apply Laurent's 
lower bounds~\cite{lau} to the linear form
\[
\Lambda = \log (\beta^{2\nu} \chi ) - 2m \log(\alpha/\beta).
\]
With the notation of this paper we have
\[
b_1=2m, \quad b_2=1, \quad \alpha_1 = \alpha/\beta, \quad 
\alpha_2= \beta^{2\nu} \chi .
\]

Using the above study and the inequality 
$\log \alpha_1 < 1/(\Delta^2-2)$ established in Lemma~3.6', 
one can choose
\[
a_1 \ge 4 \log \alpha + \frac{\rho-1}{\Delta^2-2}.
\]
Moreover,  the choice 
\[
a_2 \ge 2\bigl( \nu (\rho+3)+8\bigr) \log \beta + (\rho-1)
\log \left( 1+ \frac{1}{\Delta^2-2}\right)
\]
is legitimate  by Lemma 3.10'.

% \medskip

Now we suppose $\Delta >60$. We omit the details since the previous 
study applies almost word for word after the substitution 
$A \mapsto \Delta^2-1$.
Laurent's estimates lead to a contradiction. We conclude that
\[
\Delta \le 60.
\]

% \medskip

Then we can apply Matveev's estimates to the (expanded) linear form 
in three logarithms
\[
\Lambda = \log \chi + 2 \nu  \log \beta  - 2m \log(\alpha/\beta)
\]
and we get
\[
m < 10^{17}.
\]
To end the proof we use the Baker-Davenport lemma and a computer 
(with a real precision of 200 digits). The verification took less 
than 1 second. 
\end{proof}

On noting that, by Lemma~\ref{lem:t13}, $\Delta^2-1$ and $a$ must 
be divisible by exactly the same power of $2$ when $a$ is even, 
from Proposition~\ref{prop:delta} one deduces $\Delta \ge \sqrt{3a+1}$.
This in turn readily implies the first claim in the conclusion
of  Proposition~\ref{pr:appfin}. The lower bound on $a$ has been 
obtained by performing the reduction procedure for $a=21$, $24$. 
Improved versions are easily available after similar computations 
for values $a$  either divisible by 8 or odd and not excluded by 
Corollary~\ref{cor:main}.

Similar considerations lead to Proposition~\ref{pr:appsup}.
Following is a sketch of the ideas involved in its proof.

Trudgian has combined results from~\cite{eff},~\cite{eu}, 
and~\cite{CFF} to show in~\cite{tim} that in any $D(1)$-quintuple 
whose second smallest element is less than four times the smallest
one, the smallest three elements form a regular triple. With the
notation fixed in this section, we therefore have
\[
 r=2a-\delta
\]
for some positive integer $\delta$ that has to be odd by 
Lemma~\ref{lem:t13} above and Theorem~1.2 from~\cite{CFF},
which says that if both $a$ and $b$ are odd then $b > 40a/9$.

Theorem~\ref{thm:main} ensures $\delta >1$.
From
\[
 b=4a-4\delta +\frac{\delta ^2-1}{a}
\]
we get that $a$ divides the positive integer $\delta ^2-1$,
so that $\delta \ge \sqrt{a+1}$. As before we conclude that
$a$ and  $\delta ^2-1$ have the same 2-adic valuation when
$a$ is even. Since, on the one hand, $a= \delta ^2-1$ is 
tantamount to $b=4\delta ^2-4\delta -3$ and, on the other 
hand, routine computations show that the triple 
$(a,b,c)= (\delta ^2-1,4\delta ^2-4\delta -3, 9\delta ^2-6\delta -8)$ 
can not be prolongated to a $D(1)$-quintuple, it results 
$\delta ^2-1 \ge 3a$. Hence,
\[
 b\le 4a -4\sqrt{3a+1} +3.
\]
The lower bound on $a$ follows from this and the complementary
inequality $b> 130000$, taking into account that an even value
of $a$ must be divisible by 8.

%%%%%%%%%%%%%%%%%%%%%%%%%%%%%%%%%%%%%%%%%%%%%%%%%%%%%%%%%%%%%%%%%%%%%
\section{Concluding remarks}\label{sec:conc}
%%%%%%%%%%%%%%%%%%%%%%%%%%%%%%%%%%%%%%%%%%%%%%%%%%%%%%%%%%%%
In this paper we completed the work of previous authors and proved
in Theorem~\ref{thm:main} that each triple in the four families has
unique extension to a quadruple. It is for the first time in the 
literature that the extendability of a two-parameter family is
unconditionally settled.                         

The proof illustrates the known empirical fact that while the 
existence of `small' or `big' solutions can be relatively easily
decided, it is much more difficult to treat solutions of `medium
size'. Our attempt was successful due to use of linear forms in
the logarithms of two algebraic integers. One critical aspect of
such an approach is the need for sharp bounds for the difference
of integer coefficients of the logarithms. In the present study
we got such an information in Lemmas~\ref{le9} and~\ref{le9p2}.
It remains for future works to obtain similar bounds for general 
triples, not necessarily given parametrically.

As mentioned several times, the triples considered in this article 
are regular in the sense that $c=a+b+2r$. Another interesting 
direction for future work is to deal with non-regular triples.

%    Insert the bibliography data here.

\end{document}